\newcommand*\pFq[6][8]{
  \begingroup 
  \pFqmuskip=#1mu\relax
  \begingroup\lccode`\~=`\,
  \lowercase{\endgroup\let~}\pFqcomma
  {}_{#2}F_{#3}{\left(\genfrac..{0pt}{}{#4}{#5};#6\right)}%
  \endgroup
}
\newcommand*\pRegFq[6][8]{
  \begingroup 
  \pFqmuskip=#1mu\relax
  \begingroup\lccode`\~=`\,
  \lowercase{\endgroup\let~}\pFqcomma
  {}_{#2}\tilde{F}_{#3}{\left(\genfrac..{0pt}{}{#4}{#5};#6\right)}%
  \endgroup
}
\newcommand{\pFqcomma}{\mskip\pFqmuskip}
\newtheorem{thm}{Theorem}[section]
\newtheorem{rem}[thm]{Remark}
\newtheorem{lemma}[thm]{Lemma}
\newtheorem{prop}[thm]{Proposition}
\def\tT{{\mbox{\tiny{T}}}}
\def\cf{{\mathcal F}}
\def\SO{{\mathrm{SO}}}
\def\Oo{{\mathrm{O}}}
\def\C{{\mathbb C}}
\def\N{{\mathbb N}}
\def\R{{\mathbb R}}
\DeclareMathOperator{\Span}{span}
\DeclareMathOperator{\dom}{dom}
\DeclareMathOperator*{\esssup}{ess\,sup}
\newcommand{\red}[1]{#1}
\newcommand{\blue}[1]{#1}
\begin{document}
\title{Continuous Wavelet Frames on the Sphere:\\
The Group-Theoretic Approach Revisited
}
\author{S. Dahlke\footnotemark[1]
\and
F. De Mari \footnotemark[2]
	\and
	E. De Vito\footnotemark[2]
	\and
	M. Hansen\footnotemark[1]	
	\and
	M. Hasannasab\footnotemark[3]	
	\and
	M. Quellmalz
	\footnotemark[3]
	\and 
	G. Steidl\footnotemark[3]
	\and 
	G. Teschke
	\footnotemark[4]
}

\maketitle

\footnotetext[1]{FB12 Mathematik und Informatik, Philipps-Universit\"{a}t Marburg, Hans-Meerwein Strasse,
	Lahnberge, 35032 Marburg, Germany,
	\{dahlke, hansen\}@mathematik.uni-marburg.de.}	
	\footnotetext[2]{Dipartimento di Matematica and MaLGa center, Universit\`a di Genova, Via Dodecaneso 35, Genova, Italy, \{demari, devito\}@dima.unige.it.}
	
	\footnotetext[3]{ Institute of Mathematics, TU Berlin, Straße des 17. Juni 136, 10623 Berlin, Germany,
	\{hasannas, steidl, quellmalz\}@math.tu-berlin.de.}
	\footnotetext[4]{Hochschule Neubrandenburg
University of Applied Sciences, Brodaer Straße 2,
17033 Neubrandenburg, Germany, teschke@hs-nb.de.}

\begin{abstract}
\red{ In \cite{AV99}, Antoine and  Vandergheynst propose a
group-theoretic approach to continuous wavelet frames on the
sphere.   The frame is constructed from a single so-called admissible function 
by applying the unitary operators associated to a representation of the
Lorentz group, which is square-integrable modulo the nilpotent factor
of the Iwasawa decomposition. 
We prove necessary and sufficient conditions for functions on the
sphere, which ensure that the corresponding system is a frame.   We strengthen a similar result in \cite{AV99} by
providing a complete and detailed proof. }
\end{abstract}

\section{Introduction} \label{sec:intro}
In the recent twenty years, continuous and discrete  wavelet frames on the sphere were examined
with different approaches and in particular with various techniques for imitating a dilation on the sphere.
The methods range from extending  the discrete wavelet scheme based on multiresolution to spheres \cite{DDSW1995,Go1999}, to 
Fourier analytic ones with appropriately weighted sums of spherical harmonics 
\cite{FSG1997,PST1996,PT1995,MNPW2000}, lifted spherical wavelets \cite{Sweldens1996},
and constructions on tangent bundles of the sphere 
\cite{DM1996}. 
Further, a spherical wavelet transform based on an integral transform 
with a singular kernel was proposed
in \cite{holschneider1996continuous}. 
For more recent papers following the approximate identities idea, resp. using singular integrals,
we refer to \cite{Bernstein2009,BE2010,Nowak2015}. 
In connection with coorbit spaces spherical wavelets were considered in
\cite{DST2004}.
Various applications of spherical wavelet frames can be found, 
e.g., in \cite{FW1997,NW1996,SBHBKP2005}.

\red{In \cite{AV99} Antoine and  Vandergheynst  \blue{proposed} a
  group-theoretic approach} that generalizes  that of
Grossmann et al. \cite{GMP1985,GMP1986} from locally compact groups to the homogeneous space 
given by a quotient of the Lorentz group. 
For the 2-sphere, the construction and analysis of continuous wavelet frames was outlined in \cite{AV99} 
and for general $n$-spheres in~\cite{antoine1998wavelets}.
\red{In this paper, we exclusively focus on this construction and  
our aim} consists in \blue{adding missing information
to the analysis in \cite{AV99}, which in our opinion appears to be important and non-trivial}.
We will clearly indicate the differences and additions we made by corresponding remarks.
In particular, it appears that finding an admissible function such that the
corresponding function system \blue{indeed has upper} and lower frame \red{bounds}
is quite involved. We consider these results as the main contribution of our paper.

The outline of this paper is as follows. 
In Section \ref{sec:preliminaries}, 
we set the stage by giving the formal group-theoretic definition of wavelets on the sphere. 
Our main result is stated in Section \ref{sec:main}.
Section \ref{sec:nec-suff-condition-for-admissibilty} 
contains a proof of general necessary and sufficient conditions for a
function to be admissible. 
These conditions rely on a sequence of numbers
that has to be bounded from above in order to get an upper frame bound and 
uniformly bounded from below away from zero to have a lower frame bound.
While it is not hard to deduce these general sequence conditions, and indeed they were already provided in \cite{AV99}, 
the main work consists in finding functions which fulfill these conditions.
This is the content of the subsequent sections.
In Section \ref{asymptotics-for-zonal}, we derive necessary and sufficient conditions on functions such that an
upper frame bound for the corresponding continuous frame can be ensured.
To this end, we explore the asymptotic behavior of zonal projections.
In Section \ref{ssec:stereo}, we adopt the nice intuition of Antoine and Vandergheynst to
switch to stereographic projections. 
We show that the derived conditions on the admissible function can be rewritten 
as moment conditions on the isometrically transferred
admissible function to the plane.
Finally, the lower frame bound is treated in Section \ref{sec:lower-frame-bound}. 
As known from frame theory, conditions for lower frame bounds are typically 
more complicated than the Bessel condition, which is also the case for our setting.

\section{Preliminaries} \label{sec:preliminaries}

In this section, we first provide the basic notation on square integrable group representations and 
continuous wavelet frames in general and subsequently specify it to \blue{our spherical} setting. 
For more information, we refer to \cite{AAG2000,fuhr2005abstract}.

\subsection{Continuous wavelet frames on homogeneous spaces}

Let $H$ be a Hilbert space with scalar product $\langle \cdot, \cdot \rangle
$ and norm  $\|\cdot \| = \langle \cdot, \cdot \rangle^\frac12$, and fix
a locally compact space $X$ equipped with a Radon-measure $\nu$. 
A family $\{\eta_x\}_{x\in X}\subset H$ is called a \emph{continuous frame}, 
if for every $ \phi \in H$ the map $x \mapsto \langle \phi , \eta_x \rangle $ is measurable and 
there exist constants $0<\red{c\leq C} < \infty$ such that
\begin{equation}
\red{c}\|\phi \|^2 \leq \int_X |\langle \phi,\eta_x\rangle|^2\,d\nu(x)\leq
\red{C}\|\phi\|^2\label{eq:6}
\end{equation}
for all $\phi\in H$.
For a continuous frame $\{\eta_x\}_{x\in X}$, the corresponding
\emph{frame operator} $A$ is given 
in its weak formulation by
$$\langle A \phi,\psi\rangle 
=
\int_X \langle \phi,\eta_x \rangle \langle \eta_x, \psi\rangle \,d\nu(x), \quad \phi,\psi\in H\,,
$$
and condition~\eqref{eq:6} is equivalent to the fact that the frame
operator $A$ is bounded and boundedly invertible. 

The prototype of \blue{a} continuous frame is given by a square-integrable
representation of a locally compact group $G$ with left Haar measure
$\mu$. Let $U$  be   a \emph{continuous unitary representation} of $G$
in $H$, {\em i.e.}  $U$ is a mapping from 
$G$ into the space $\mathcal U(H)$ of unitary operators on $H$ fulfilling
$U(g g') = U(g) U(g')$ for all $g,g' \in G$, and
the function $g \mapsto \langle \phi, U(g) \psi \rangle$ must be continuous for every $\phi,\psi \in H$.
In what follows, we will only consider continuous representations and
omit the word 'continuous' in this context.  
An irreducible representation  is
called \emph{square integrable}, 
if there exists a vector $\eta \in H \setminus\{0\}$ such that
\begin{equation}\label{eq:admiss_group}
\int_G\bigl|\langle\eta,U(g)\eta\rangle\bigr|^2\,d\mu(g)<\infty.
\end{equation}
In this case, the  vector $\eta$ is called \textit{admissible}.
With the choice $\nu=\mu$, general
results from representation theory
\cite{duflo1976regular} guarantee that
the  family $\{U(g)\eta\}_{g\in G}$ is a
continuous frame, and  the
corresponding frame operator $A_\eta$ with
\begin{equation} \label{frame_op}
\langle A_\eta\phi,\psi\rangle
= 
\int_G\langle\phi,U(g)\eta\rangle\langle U(g)\eta,\psi\rangle\,d\mu(g) , \quad \phi,\psi \in H,
\end{equation}
is in fact  a multiple of the identity.  

In this paper, we are concerned with a homogeneous space $X = G/N$, 
where $N$ is a closed subgroup of $G$ 
instead of the whole group.  We fix
on $X$ a strongly quasi-invariant
measure $\nu$, see~\eqref{quasi} and 
\cite{folland2016course} for the
general theory. Since in general a representation is not directly defined on $X$, 
we need to introduce a (measurable) section $\sigma: X \rightarrow G$, i.e. a map which for each $x\in X$ assigns an element $\sigma(x)\in G$ such that $\sigma(x)$ belongs to the coset $x$.
We then call a representation $U$ of $G$ in $H$ \emph{square-integrable modulo} $(N,\sigma)$, if there exists
a vector $\eta\in H \backslash\{0\}$ such that 
the family
$\{U(\sigma(x))\eta\}_{x\in X}$ is a
continuous frame, {\em i.e.}  such that the operator $A_\eta$ weakly defined via
	\begin{equation}\label{eq:def-cont-frame}
		\langle A_\eta \phi,\psi\rangle
		=\int_X \langle \phi,  U\left(\sigma(x) \right)\eta \rangle \langle U\left(\sigma(x) \right)\eta,\psi\rangle\,d\nu(x)\,,
		\quad\phi,\psi\in H,
	\end{equation}
is bounded and boundedly
invertible. We call the
corresponding vector $\eta\in
H\backslash\{0\}$ \emph{admissible}.

Notice that the definition of
admissibility is often given in a
different way in the literature, as pointed out in the following remark.

\begin{rem}
\red{We observe that in \cite{AV99} and in
\cite{DST2004} a vector $\eta\in
H\backslash\{0\}$ is called \textrm{admissible} if 
\begin{equation}
\int_X\bigl|\langle\phi,U(\sigma(x))\eta\rangle\bigr|^2\,d\nu(x)<\infty\label{eq:9}
\end{equation}
for all $\phi\in H$.
If $X=G$ and the
representation $U$ is irreducible,
then the upper and lower bounds follow from~\eqref{eq:admiss_group}.
However, if $X=G/N$ with $N$
non-trivial, ~\eqref{eq:admiss_group} does not imply~\eqref{eq:9},
which in general is not sufficient to ensure the lower bound, see
\cite{fuhr2005abstract} for a
complete discussion. }
\end{rem}

\subsection{Group theoretic aspects} \label{ssec:rep-sphere}
 \red{We briefly recall the group theoretical
construction, introduced in~\cite{AV99}, of wavelets on the two dimensional sphere
$$
\mathbb S^2 := \{ \omega\in\R^3 : \omega^\tT\omega=1 \} =\{(\sin \theta
\cos \varphi,\sin \theta \sin\varphi, 
\cos \theta)^\tT: \varphi \in [0,2\pi), \theta \in [0,\pi]\}.  
$$
There is no harm in  regarding $\omega$ and $(\theta,\varphi)$ both as points on the sphere and as points
in $\mathbb R^3$ and we shall do so.  We equip $\mathbb S^2$ with the Riemannian
surface element  $d\Sigma(\omega) = \sin \theta d \theta d \varphi$.
The Hilbert space $H$  we are interested in is $L_2(\mathbb S^2) =
L_2(\mathbb S^2,\Sigma)$ and we denote the  inner product by
$\langle\cdot,\cdot\rangle$ and the norm by 
$\|\cdot\| = \langle \cdot, \cdot\rangle^\frac12$. 

The group $G=\SO(3,1)_0$ is the connected component of the identity of the \emph{Lorentz group}
\[ \Oo(3,1) = \{ g \in \mathbb R^{4,4}: g^\tT I_{3,1}  g = I_{3,1}\},\] 
where $I_{3,1}=\operatorname{diag}(1,1,1,-1)$. Its \emph{Iwasawa
  decomposition} reads as 
$$\SO(3,1)_0=KAN,$$
where $K$ is the maximal compact subgroup of \emph{rotations }
\[
K=\left\{
\begin{bmatrix}
  \gamma & 0\\
  0 & 1
\end{bmatrix} : \gamma \in \SO(3) \right\} \sim \SO(3)= \{\gamma  \in
\mathbb R^{3,3}: 
\gamma^\tT \gamma = I_3, \det \gamma = 1\}  ,
  \]
the factor $A$ is the Abelian subgroup of \emph{dilations}, 
\[
A = \left\{
\begin{bmatrix}
I_2 & 0 & 0 \\
  0  & \frac{a+a^{-1}}{2} &\frac{a-a^{-1}}{2} \\
  0  & \frac{a-a^{-1}}{2}& \frac{a+a^{-1}}{2}
\end{bmatrix} :  a \in \R_*^+ \right\} \sim \R_*^+,
 \]
 and $N $ is the nilpotent (in fact Abelian) subgroup of \emph{translations}
\[
N = \left\{
\begin{bmatrix}
   I_2 & -b  &   b \\
  b^\tT  &  1- \frac{|b|^2}{2}&  \frac{|b|^2}{2}\\
  b^\tT  & -\frac{|b|^2}{2} & 1+\frac{|b|^2}{2}
\end{bmatrix} :  b \in  \R^2 \right\} \sim \R^2 .
\]
From now on, we identify $K$, $A$ and $N$ with $\SO(3)$, $\R_*^+$ and
$\R^2$, respectively. The minimal parabolic subgroup of $\SO(3,1)_0$ is
    \[ P= MAN,\]
 where $M$ is the centralizer of $A$ in $K$, namely
\[
M=\left\{
\begin{bmatrix}
u&0\\
 0& I_2
\end{bmatrix} :u\in \SO(2)\right\} \sim \SO(2) = \{ u \in
\mathbb R^{2,2}: 
u^\tT u = I_2, \det u = 1\}.  
\]
Since 
\[
\SO(3,1)_0/P = KAN/MAN\sim K/M=\SO(3)/\SO(2)\sim \mathbb S^2,
  \]
the group $\SO(3,1)_0$ acts transitively on $\mathbb S^2$ and we denote
the corresponding action  by
\[
\SO(3,1)_0\times \mathbb S^2 \ni (g,\omega)\mapsto g\omega \in \mathbb S^2 .
  \]
  In particular, for all rotations $\gamma\in \SO(3)$, we have
  \begin{alignat}{1}
    \label{eq:4}
    \gamma \omega & = \gamma .\omega ,
  \end{alignat}
  where $\gamma .\omega $ denotes the action of the matrix $\gamma$ on
  the vector $\omega$, and for all  dilations $a\in \R_*^+$,
  \[
a \omega = (\theta_a ,\varphi)=:\omega_a, \qquad  \tan(\tfrac{1}{2}\theta_a)=a\tan(\tfrac{1}{2}\theta).
\]
The measure $d\Sigma$ is quasi-invariant with respect to the action of
$\SO(3,1)_0$. This means for all $g\in\SO(3,1)$ that
  \begin{equation}
\int_{\mathbb S^2} f(g\omega)\, d\Sigma(\omega)= \int_{\mathbb S^2} \kappa(g,\omega)
f(\omega)\, d\Sigma(\omega)
,\label{quasi}
\end{equation}
where $f:\mathbb S^2\to\C$ is such that one of the two sides, hence both, is
  finite, and   $\kappa$ is a function that enjoys the cocycle property
\begin{equation}\label{eq-cocycle}
	\kappa(g_1 g_2,\omega)=\kappa(g_1,\omega)\kappa(g_2,g_1^{-1}\omega),\quad g_1,g_2\in \SO(3,1), \omega\in \mathbb S^2 .
      \end{equation}     
In particular, for all rotations $\gamma\in\SO(3)$, 
      \[ \kappa(\gamma,\omega) =     1, \quad \omega \in \mathbb S^2,\]
and for all dilations $a\in\R_*^+$, 
\[
\kappa(a,\omega) = \frac{4a^2}{[(a^2-1)\cos\theta+(a^2+1)]^2}, \quad \omega = (\theta,\varphi) \in \mathbb S^2.
  \]
As a particular instance of the theory of parabolic induction
\cite{knapp2013lie}, there is a   natural irreducible unitary
representation of $\SO(3,1)_0$ acting on $L_2(\mathbb S^2)$ as
\begin{equation}\label{eq-rep-hom}
\bigl[U(g)f\bigr](\omega)=\kappa(g,\omega)^{1/2}f(g^{-1}\omega),\quad g\in \SO(3,1), \, f\in L_2( \mathbb S^2).
\end{equation}
The above representation is not square integrable. However, as
first stated in \cite{AV99}, it is
square-integrable modulo $(N,\sigma)$, where  the  homogeneous space is
\[
  X:=\SO(3,1)_0/N\sim \SO(3)\times\R_*^+,
  \]
and  the section $\sigma: X\to \SO(3,1)_0$ is 
$$\sigma(\gamma,a)=\gamma a,\quad \gamma\in \SO(3), a\in A\,.$$
Note that  $X$ admits an $\SO(3,1)$-invariant measure $\nu$ given by
$$d\nu(\gamma,a)=\frac{d\mu(\gamma)\,da}{a^3},$$
where  $\mu$ is the Haar measure of $\SO(3)$ normalized in such a way
that $\mu(\SO(3))=8\pi^2$ and $da$ is the Lebesgue measure on the real
line. 

As shown in \cite[Prop. 3.2]{AV99},  the representation
\eqref{eq-rep-hom} on the \blue{homogeneous} space $X$ factorizes as 
	$$U(\sigma(\gamma,a))=\lambda(\gamma)D_a,$$
where  $\lambda$ is the quasi-regular representation of $\SO(3)$
acting on  $L_2(\mathbb S^2)$ given for $\gamma\in \SO(3)$ by
\begin{equation}\label{prop-rep-factorized}
\bigl[\lambda(\gamma)f\bigr](\omega)=f(\gamma^{-1}\omega),\quad
\omega\in\mathbb S^2,\, f\in L_2(\mathbb S^2),
\end{equation}
and $a\mapsto D_a$ is the unitary representation of $\R_*^+$ acting on
$L_2(\mathbb S^2)$ via dilation operators, namely, for any $a\in\R_*^+$,  
	$$(D_a f)(\omega) :=\kappa(a,\theta)^{1/2} f(\omega_{1/a}),
        \quad \omega\in\mathbb S^2,\, f\in L_2(\mathbb S^2) .$$

In \cite{AV99} the continuous wavelets on the sphere $\mathbb S^2 $ are defined as
the family
\[
\left\{U(\sigma(x))\eta\right\}_{x\in\SO(3)\times\R_*^+},
\]
where $\eta\in L^2(\mathbb S^2)$ is a suitable function.  Theorem~\ref{thm:main}
below provides necessary and sufficient conditions on the vector
$\eta$ such that
$\left\{U(\sigma(x))\eta\right\}_{x\in\SO(3)\times\R_*^+}$ is a
continuous frame, {\em i.e.} it satisfies~\eqref{eq:6}.   We refer to
\cite{AV99} for a geometrical interpretation of $U(\sigma(x))$ in
terms of stereographic \blue{projections}. 

The above result was first stated in~\cite{AV99} with a sketch of the
proof. In  Section~\ref{NSconditions}, we provide a complete proof, filling in  details that are missing
in \cite{AV99}. To this end,  we denote the spherical harmonics  by
$$
Y_\ell^m (\theta,\varphi) = 
	\sqrt{\frac{2\ell+1}{4\pi}\frac{(\ell-m)!}{(\ell+m)!}}
	P_{\ell}^m(\cos\theta)\mathrm{e}^{\mathrm{i} m\varphi}, \quad |m|\le \ell,\, \ell \in \mathbb N_0,
        $$
where $P_{\ell}^m$ denote the  associated Legendre polynomials. We
recall  that        
        $\{
Y_\ell^m\}_{\ell \in \N_0, |m|\leq \ell}$ 
form an orthonormal basis of $L_2(\mathbb S^2)$ and that for each $\ell\in \N_0$ 
the orthogonal projection  onto the spherical zone
\[\mathcal{Y}_\ell=\Span\{Y_\ell^m:|m|\leq\ell\}\]
is given by
\begin{equation} \label{proj}
\Pi_\ell f(\omega) = \sum_{|m|\leq \ell} \langle f , Y_\ell^m \rangle
Y_\ell^m (\omega) 
=
(2\ell + 1) \int_{\mathbb S^2} P_\ell (\omega\cdot\omega') f(\omega') d\Sigma(\omega'),\qquad\omega\in \mathbb S^2,
\end{equation}
where $P_\ell = P_\ell^0$ is the Legendre polynomials of degree $\ell \in \mathbb N_0$.
\blue{Note that each $\mathcal Y_\ell$ is an invariant subspace of dimension $2\ell+1$ for the representation $\lambda$ of $\SO(3)$, and that the restriction of $\lambda$
to $\mathcal Y_\ell$  is irreducible. 
}}

\section{Main result} \label{sec:main}
The main results of this paper  are the following necessary and sufficient conditions on a function
$\eta \in L_2(\mathbb S^2)$ to be admissible.

\red{\begin{thm}[Main Result] \label{thm:main}
  Fix   $\eta\in L_2(\mathbb S^2)$ such that
\begin{alignat}{1} \label{eq:7}
    & \int_{\mathbb S^2} |\eta(\theta , \varphi)|\frac{1}{1 + \cos\theta}
    \, d\Sigma(\omega) < +\infty\, ,\\
    & \int_{\mathbb S^2} |\eta(\theta , \varphi)|\frac{
      \tan^2(\tfrac\theta2)}{1 + \cos\theta}    \,d\Sigma(\omega)
    < +\infty \, ,\label{eq:8} \\
    & \esssup_{(\theta,\varphi)\in\mathbb S^2}\left(
    |\eta(\theta,\varphi)|\frac{1+\tan^2(\tfrac\theta2)}{1-\cos\theta}\right)<+\infty.
  \, \label{eta_tilde}  
\end{alignat}
Then there is a constant $B>0$ such that
  \[
\int_X\bigl
|\bigl\langle\phi,U(\sigma(x))\eta\bigr\rangle\bigr|^2\,d\nu(x) \leq  B \|\phi\|^2
    \]
 for all $\phi\in L^2(\mathbb S^2)$ if and only if
		\begin{equation}
			\label{eq:cond-eta}
			\int_{\mathbb S^2} \eta(\theta , \varphi) \frac{1}{1 + \cos\theta}\, d\Sigma(\theta,\phi)  = 0\,.
                      \end{equation}
Under this condition, the frame operator
$A_\eta: L_2(\mathbb S^2)\to L_2(\mathbb S^2)$ defined by
		\begin{equation} \label{eq:2} 
			\langle A_\eta\phi,\psi\rangle
		=\int_X\bigl\langle\phi,U(\sigma(x))\eta\bigr\rangle
			\langle U(\sigma(x))\eta,\psi\rangle\,d\nu(x) 
		\end{equation}
is bounded, and it is boundedly invertible if and only if the function
\begin{equation}
  \label{eq:11}
  \theta \mapsto  \int_0^{2\pi}\eta(\theta,\varphi)\,d\varphi \neq 0 \, .
\end{equation}
\end{thm}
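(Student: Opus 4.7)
My overall plan is to diagonalize the analysis operator by spherical harmonics, reducing the whole frame inequality to a scalar sequence condition, and then to analyze that sequence using stereographic projection and asymptotics of zonal projections. Using the factorization $U(\sigma(\gamma,a))=\lambda(\gamma)D_a$, the invariance and irreducibility of $\mathcal Y_\ell$ under the $\SO(3)$-representation $\lambda$, and Schur orthogonality (with $\mu(\SO(3))=8\pi^2$), I would write, for each fixed $a\in\R_*^+$,
\begin{equation*}
  \int_{\SO(3)}\bigl|\langle\phi,\lambda(\gamma)D_a\eta\rangle\bigr|^2\,d\mu(\gamma)
  =\sum_{\ell=0}^{\infty}\frac{8\pi^2}{2\ell+1}\,\|\Pi_\ell\phi\|^2\,\|\Pi_\ell D_a\eta\|^2,
\end{equation*}
and then integrate in $a$ against $da/a^3$. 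Assuming the interchange of sum and integral (which I would justify by monotone convergence after squaring), this yields the diagonal identity
\begin{equation*}
  \int_X\bigl|\langle\phi,U(\sigma(x))\eta\rangle\bigr|^2\,d\nu(x)=\sum_{\ell=0}^\infty K_\ell(\eta)\,\|\Pi_\ell\phi\|^2,\quad
  K_\ell(\eta):=\frac{8\pi^2}{2\ell+1}\int_0^\infty\|\Pi_\ell D_a\eta\|^2\,\frac{da}{a^3}.
\end{equation*}
Consequently, boundedness of $A_\eta$ is equivalent to $\sup_\ell K_\ell(\eta)<\infty$, and bounded invertibility to also having $\inf_\ell K_\ell(\eta)>0$.

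For the upper bound, I would follow the route outlined in Sections~\ref{asymptotics-for-zonal} and~\ref{ssec:stereo}: the integral in $K_\ell(\eta)$ is controlled by the asymptotic behavior of $\|\Pi_\ell D_a\eta\|^2$ as $a\to0$ and $a\to\infty$, where the weight $a^{-3}$ threatens integrability. Switching to stereographic coordinates from the south pole via $r=\tan(\theta/2)$, the dilation operator $D_a$ becomes an isometric Euclidean dilation on $\R^2$, and the weighted integral takes the form of a Calder\'on-type reproducing expression for the planar counterpart of $\eta$. Uniform finiteness of $K_\ell(\eta)$ in $\ell$ then translates, via the asymptotics of the zonal projections $\Pi_\ell$, into a single vanishing moment of the transferred function, which after inverse stereographic substitution is exactly condition~\eqref{eq:cond-eta}. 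The three integrability hypotheses~\eqref{eq:7}--\eqref{eta_tilde} are precisely what is needed for those planar moments to be finite and for the error terms in the zonal asymptotics to be controlled uniformly in $\ell$ and~$a$. Conversely, once~\eqref{eq:cond-eta} holds, the same asymptotics give $\sup_\ell K_\ell(\eta)<\infty$, closing the equivalence for the Bessel bound.

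For the lower bound, I would analyze the $\varphi$-average of $D_a\eta$. Since $D_a$ only reparametrizes $\theta$ and multiplies by a rotationally symmetric weight, the zonal $(m=0)$ spherical-harmonic coefficients of $D_a\eta$ depend only on the function $\theta\mapsto\int_0^{2\pi}\eta(\theta,\varphi)\,d\varphi$. If this $\varphi$-average vanishes identically, then the zonal part of $D_a\eta$ is zero for every $a$, and a direct estimate shows that the remaining non-zonal contributions to $K_\ell(\eta)$ tend to zero as $\ell\to\infty$, ruling out a uniform lower bound. In the opposite direction, when the $\varphi$-average is not identically zero, the Calder\'on-type integral over $a$ applied to the zonal part is both finite and bounded below uniformly in $\ell$, and this descends to $\inf_\ell K_\ell(\eta)>0$ and hence to bounded invertibility of $A_\eta$.

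The main obstacle is the uniform-in-$\ell$ asymptotic analysis of $\|\Pi_\ell D_a\eta\|^2$ near the singular endpoints $a=0$ and $a=\infty$; this is where the weight $a^{-3}$ bites, where the precise form of the hypotheses~\eqref{eq:7}--\eqref{eta_tilde} is used, and where the passage to stereographic coordinates is essential to keep the bookkeeping manageable. Sharpening the asymptotics enough to make~\eqref{eq:cond-eta} and~\eqref{eq:11} \emph{necessary}, rather than merely sufficient, is the analytic heart of the argument and the point where the present proof goes beyond the sketch in~\cite{AV99}.
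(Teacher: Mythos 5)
Your reduction to the scalar sequence is essentially the paper's Theorem~\ref{prop-admissible-upper-er} (your $K_\ell$ is $8\pi^2G_\ell$), and the necessity of \eqref{eq:cond-eta} via the $a\downarrow 0$ asymptotics matches Proposition~\ref{lemma-asymp-1}; as a small point, the interchange you wave at with ``monotone convergence after squaring'' is the expansion of $|\langle\phi,\lambda(\gamma)D_a\eta\rangle|^2$ into a double sum over $\ell,\ell'$ integrated over $\SO(3)$, which the paper only justifies on the dense subspace of finite sums of zonal components and then extends by a closed-graph argument. The real issue is that from there your proposal stays at the level of a plan precisely where the work begins. For the Bessel bound you assert that ``the same asymptotics give $\sup_\ell K_\ell(\eta)<\infty$,'' but the asymptotics only yield finiteness of each $G_\ell$ with a constant growing like $\ell^2$ (through $\|P_\ell'\|_{L_\infty}=\ell(\ell+1)/2$); uniformity in $\ell$ requires the additional mechanism of Theorem~\ref{thm:upper}: an $\ell$-dependent split of the $a$-integral at $\varepsilon^{-2}\sim\ell$, the pointwise bound \eqref{eta_tilde} (which your outline lists but never actually deploys), and the estimate $\|Q_\ell\|_{L_1([-1,1])}=\mathcal O(\ell)$ for $Q_\ell(t)=(P_\ell(t)-1)/(t-1)$. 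Note also that the paper's upper-bound proof is carried out directly on the sphere; the stereographic projection of Section~\ref{ssec:stereo} only reinterprets the hypotheses as moment conditions and is not the proof vehicle you make it out to be.

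The treatment of invertibility contains two genuine gaps. For the ``only if'' direction you claim that when $\widetilde\eta\equiv 0$ the non-zonal ($m\neq 0$) contributions to $K_\ell(\eta)$ tend to zero as $\ell\to\infty$; this is neither proved nor plausible as stated, and it is unnecessary: since $Y_0^0$ is constant and $D_a$-invariant, $\widetilde\eta\equiv 0$ forces $\Pi_0\eta_a=0$ for every $a$, hence $G_0=0$ and invertibility fails immediately. For the ``if'' direction, the assertion that the zonal Calder\'on-type integral is ``bounded below uniformly in $\ell$'' is exactly the hard analytic content you leave unproved. The paper needs two separate ingredients: the Hilb-type asymptotic $P_\ell(\cos\zeta)=\sqrt{\zeta/\sin\zeta}\,J_0\bigl((\ell+\tfrac12)\zeta\bigr)+\mathcal O(\ell^{-3/2})$ with the rescaling $a=c/\ell$, a Fatou-type argument over dyadic blocks, and the Hankel--Parseval identity to obtain $\liminf_{\ell\to\infty}G_\ell>0$ (Theorem~\ref{thm:lower}); and, independently, the strict positivity of each individual $G_\ell$ via a Mellin-transform analyticity argument (Proposition~\ref{prop-strictly-positive}), without which $\liminf_\ell G_\ell>0$ does not give $\inf_\ell G_\ell>0$. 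Neither appears in your outline, so the lower frame bound is not established.
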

The proof of the theorem is given in the rest of the paper, but
  we  first add a few comments and  some notation. For $\eta\in
  L_2(\mathbb S^2)$, set 
  \begin{alignat}{2}
   \eta^{[1]}(\theta ,\varphi) & := \frac{\eta(\theta , \varphi)}{1 +
     \cos\theta}, \quad &&\quad	(\theta,\varphi)\in \mathbb
   S^2, \label{def:eta-prime}\\ 
	\eta^{[2]}(\theta ,\varphi) & := \eta^{[1]}(\theta
        ,\varphi)\tan^2(\tfrac\theta2), \quad && \quad 
	(\theta,\varphi)\in \mathbb S^2, \label{def:eta-second}\\
      \widetilde\eta(\theta)
      &:=\frac{1}{2\pi}\int_0^{2\pi}\eta(\theta,\varphi)\,d\varphi,
      \quad&&\quad \theta\in[0,\pi]\,.  \label{def:eta-tilde}
    \end{alignat}
Conditions~\eqref{eq:7} and~\eqref{eq:8} state that $ \eta^{[1]}$ and
$ \eta^{[2]}$ are integrable functions, respectively,
and~\eqref{eq:11} means that $\widetilde\eta\neq 0$.  It is
clear that if $\eta$ is an axisymmetric function,
\emph{i.e.}, $\eta$ is independent of  the longitude $\varphi$, then
$\widetilde\eta=\eta$ and~\eqref{eq:11} simply states that $\eta$ is a
non-zero vector. Under these
assumptions,  if $\eta$ is a continuous function with support contained
in $(0,\pi)$, conditions \eqref{eq:7},~\eqref{eq:8}
and~\eqref{eta_tilde} hold true. \blue{The following result shows that there is
a rather natural construction for functions $\eta$ fulfilling the
vanishing mean condition \eqref{eq:cond-eta}.} }

\begin{lemma}\label{exist:eta}
	For every $\zeta\in L_2(\mathbb S^2)$ and $\alpha>0$, we have
	$$\int_{\mathbb S^2}\frac{\zeta(\theta,\varphi)}{1+\cos\theta}\,d\Sigma(\omega)
	=\frac1\alpha\int_{\mathbb S^2}\frac{D_\alpha\zeta(\theta,\varphi)}{1+\cos\theta}\,d\Sigma(\omega).$$
	As a consequence, the function
        $\eta=\zeta-\alpha^{-1}D_\alpha\zeta$ fulfills the
        cancellation condition 
	\eqref{eq:cond-eta}.  
\end{lemma}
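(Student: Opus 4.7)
The plan is to establish the identity by a direct change of variables in the $\theta$-integral, after which the cancellation condition for $\eta=\zeta-\alpha^{-1}D_\alpha\zeta$ follows immediately by linearity.

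Writing out the right-hand side as
\[
\frac{1}{\alpha}\int_0^{2\pi}\!\int_0^\pi \kappa(\alpha,\theta)^{1/2}\,\zeta(\theta_{1/\alpha},\varphi)\,\frac{\sin\theta}{1+\cos\theta}\,d\theta\,d\varphi,
\]
I would perform the substitution $\theta':=\theta_{1/\alpha}$, characterized by $\tan(\theta/2)=\alpha\tan(\theta'/2)$, which is a smooth bijection of $(0,\pi)$ onto itself, and leave the $\varphi$-integral untouched.

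The entire argument then reduces to checking three half-angle identities. First, using $1+\cos\theta=2\cos^2(\theta/2)$, $1-\cos\theta=2\sin^2(\theta/2)$, and the substitution, one obtains
\[
(\alpha^2-1)\cos\theta+(\alpha^2+1)=2\alpha^2\cos^2(\tfrac\theta2)+2\sin^2(\tfrac\theta2)=\frac{2\alpha^2\cos^2(\theta/2)}{\cos^2(\theta'/2)},
\]
whence $\kappa(\alpha,\theta)^{1/2}=\cos^2(\theta'/2)/[\alpha\cos^2(\theta/2)]$. Second, differentiating $\tan(\theta/2)=\alpha\tan(\theta'/2)$ yields $d\theta=\alpha\,[\cos^2(\theta/2)/\cos^2(\theta'/2)]\,d\theta'$. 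Third, $\sin\theta/(1+\cos\theta)=\tan(\theta/2)=\alpha\tan(\theta'/2)$. Multiplying these three factors together with the prefactor $1/\alpha$, all powers of $\alpha$ and all $\cos^2$ factors collapse, leaving exactly $\tan(\theta'/2)\,d\theta'=\sin\theta'/(1+\cos\theta')\,d\theta'$. Substituting back, the right-hand side becomes $\int_{\mathbb S^2}\zeta(\theta',\varphi)/(1+\cos\theta')\,d\Sigma(\omega')$, which is the left-hand side.

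The consequence is then immediate: by linearity,
\[
\int_{\mathbb S^2}\frac{\eta(\theta,\varphi)}{1+\cos\theta}\,d\Sigma(\omega)=\int_{\mathbb S^2}\frac{\zeta}{1+\cos\theta}\,d\Sigma-\frac1\alpha\int_{\mathbb S^2}\frac{D_\alpha\zeta}{1+\cos\theta}\,d\Sigma=0,
\]
which is exactly \eqref{eq:cond-eta}. There is no serious obstacle in this argument; the only point requiring attention is the algebraic bookkeeping in the half-angle substitution. A minor subtlety is that the weight $1/(1+\cos\theta)$ is not integrable against $d\Sigma$ near $\theta=\pi$ for a general $\zeta\in L_2(\mathbb S^2)$, so the identity should be read as asserting that either both sides are absolutely convergent and equal or both diverge; this is automatic once the pointwise equality of measures on $(0,\pi)$ is established, by an application of Tonelli's theorem to $|\zeta|$.
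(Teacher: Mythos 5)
Your proof is correct. The computation checks out: with $\tan(\theta/2)=\alpha\tan(\theta'/2)$ one indeed gets $\kappa(\alpha,\theta)^{1/2}=\cos^2(\theta'/2)/[\alpha\cos^2(\theta/2)]$, $d\theta=\alpha[\cos^2(\theta/2)/\cos^2(\theta'/2)]\,d\theta'$ and $\sin\theta/(1+\cos\theta)=\alpha\tan(\theta'/2)$, and the product with the prefactor $1/\alpha$ collapses to $\tan(\theta'/2)\,d\theta'$, which is the left-hand side. This is, however, a different route from the paper's: there the lemma is obtained in three lines from Lemma \ref{lemma-stereo}, namely by writing $\int_{\mathbb S^2}\frac{\zeta}{1+\cos\theta}\,d\Sigma=\int_0^{2\pi}\int_0^\infty(\Theta\zeta)(r,\varphi)\,dr\,d\varphi$ (the substitution $r=\tan(\theta/2)$ already carried out in the remark of Section \ref{ssec:stereo}), rescaling $r\mapsto s/\alpha$ in the Lebesgue measure $dr$, and using the intertwining $\Theta D_\alpha=D_\alpha^+\Theta$. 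Your substitution $\tan(\theta/2)=\alpha\tan(\theta'/2)$ is precisely the conjugation of the planar dilation by the stereographic projection, so the two arguments are the same change of variables in different coordinates: the paper's version is shorter because it reuses already-established structure, while yours is self-contained on the sphere at the cost of the half-angle bookkeeping. Your closing caveat is also well taken: for a general $\zeta\in L_2(\mathbb S^2)$ the weight $1/(1+\cos\theta)$ need not be integrable against $|\zeta|\,d\Sigma$ near $\theta=\pi$, so the identity must be read as an equality of possibly infinite quantities (or applied first to $|\zeta|$ via Tonelli); the paper leaves this implicit, relying on condition \eqref{eq:7} where the lemma is actually used.
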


The proof of the lemma is given at the end of  Section \ref{sec:lower-frame-bound}.

\section{General necessary and sufficient admissibility condition} \label{sec:nec-suff-condition-for-admissibilty}\label{NSconditions}
The following theorem gives a necessary and sufficient condition 
for the frame operator associated to $\{U(\sigma(x))\eta\}_{x\in X}$ 
to be bounded and boundedly invertible. The relation to \cite[Theorem 3.3]{AV99} is explained in a subsequent remark.
We set:
$$\eta_a := D_a \eta\, .$$

\begin{thm}\label{prop-admissible-upper-er}
	\red{Take $\eta\in L_2(\mathbb S^2)$  and set}
	\begin{equation}\label{G_ell}
		G_{\ell}:=
		\frac{1}{2\ell+1}\int_0^\infty
		\langle \Pi_{\ell}\eta_a , \eta_a \rangle \frac{da}{a^3} 
		= \frac{1}{2\ell+1}\int_0^\infty
		\| \Pi_{\ell}\eta_a \|^2 \frac{da}{a^3}\,,
		\qquad\ell\in\N_0.
	\end{equation}
\red{Then} the following conditions are equivalent: 
	\begin{itemize}
		\item[i)]
		There exists a constant $C_\eta>0$ such that for every $\phi\in L_2(\mathbb S^2)$,
		\begin{equation}\label{eq:1}
			\int_X\bigl|\bigl\langle\phi,U(\sigma(x))\eta\bigr\rangle\bigr|^2\,d\nu(x)
			\leq C_\eta\|\phi\|^2.
		\end{equation}
			\item[ii)]
		The sequence $(G_\ell)_{\ell\in\N_0}$ is bounded.
	\end{itemize}
	If one of the above two conditions holds true, then the best constant $C_\eta$ is given by
		$$C_\eta=8\pi^2\sup_{\ell\in\N_0}G_\ell,$$
and the operator $A_\eta: L_2(\mathbb S^2)\to L_2(\mathbb S^2)$ defined by \eqref{eq:2} fulfills
		\begin{align}
			\langle A_\eta\phi,\psi\rangle
			&=\red{8\pi^2} \sum_{\ell\in\N_0} G_\ell \langle \Pi_\ell \phi,\psi\rangle, \label{eq:3}
		\end{align}
		for all $\phi,\psi\in L_2(\mathbb
                S^2)$. \red{Furthermore, $A_\eta$ is boundedly invertible if and only if the
	sequence $(G_\ell)_{\ell\in \N_0}$ is bounded from below by a
        strictly positive constant.  
In this case, we  have
	$$\|A_\eta^{-1}\|=\Bigl(\inf_{\ell\in\N_0}G_\ell\Bigr)^{-1}\,.$$
 \blue{As a consequence}, $\eta$ is an admissible vector if and only if there exist
constants $0<c_\eta\leq C_\eta$ such that for all $\ell\in\N_0$,
\[ c_\eta\leq G_\ell\leq C_\eta\, .\]
}                  
\end{thm}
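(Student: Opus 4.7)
The plan hinges on two structural facts already established in the preliminaries: the factorization $U(\sigma(\gamma,a))=\lambda(\gamma)D_a$ together with the product structure $X\simeq\SO(3)\times\R_*^+$ and invariant measure $d\nu(\gamma,a)=d\mu(\gamma)\,da/a^3$; and the orthogonal decomposition $L_2(\mathbb S^2)=\bigoplus_{\ell\in\N_0}\mathcal Y_\ell$ into pairwise inequivalent irreducible $\lambda$-invariant subspaces. Together they will diagonalize the quadratic form in \eqref{eq:1} with respect to the zonal decomposition, reducing admissibility to a pointwise condition on $(G_\ell)_{\ell\in\N_0}$.

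The first step is to rewrite
\[
\int_X |\langle \phi, U(\sigma(x))\eta\rangle|^2\,d\nu(x) = \int_0^\infty \int_{\SO(3)} |\langle \phi, \lambda(\gamma)\eta_a\rangle|^2\,d\mu(\gamma)\,\frac{da}{a^3}.
\]
Since each $\mathcal Y_\ell$ is $\lambda$-invariant, one has $\langle\phi,\lambda(\gamma)\eta_a\rangle=\sum_\ell\langle\Pi_\ell\phi,\lambda(\gamma)\Pi_\ell\eta_a\rangle$. Schur orthogonality of matrix coefficients of inequivalent irreducibles (the $\lambda|_{\mathcal Y_\ell}$ have distinct dimensions $2\ell+1$, hence are pairwise inequivalent) kills every cross term with $\ell\neq\ell'$ upon integration in $\gamma$, while on each diagonal block it gives
\[
\int_{\SO(3)} |\langle u, \lambda(\gamma) v\rangle|^2\,d\mu(\gamma) = \frac{8\pi^2}{2\ell+1}\,\|u\|^2\,\|v\|^2, \qquad u,v\in \mathcal Y_\ell,
\]
using $\mu(\SO(3))=8\pi^2$ and $\dim\mathcal Y_\ell=2\ell+1$. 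Applying Tonelli to the remaining $a$-integral (all integrands are non-negative) yields the key identity
\[
\int_X |\langle \phi, U(\sigma(x))\eta\rangle|^2\,d\nu(x) = 8\pi^2 \sum_{\ell\in\N_0} G_\ell\,\|\Pi_\ell\phi\|^2.
\]

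All claims of the theorem then follow from this identity. Since $\|\phi\|^2=\sum_\ell\|\Pi_\ell\phi\|^2$, the Bessel inequality \eqref{eq:1} with constant $C_\eta$ is equivalent to $8\pi^2 G_\ell\leq C_\eta$ for every $\ell$; testing against $\phi=Y_\ell^m$, for which only one projection is non-zero, shows the optimal constant is exactly $C_\eta=8\pi^2\sup_\ell G_\ell$. Polarizing the key identity yields the formula \eqref{eq:3}, which exhibits $A_\eta$ as the diagonal operator $8\pi^2\sum_\ell G_\ell\,\Pi_\ell$ with respect to the decomposition $L_2(\mathbb S^2)=\bigoplus_\ell\mathcal Y_\ell$. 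Consequently $A_\eta$ is boundedly invertible iff $\inf_\ell G_\ell>0$, and the norm of its inverse is the reciprocal of the infimum of its eigenvalues on the blocks.

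The main technical point I expect is the rigorous use of Schur orthogonality in the Hilbert-space setting and the legitimate interchange of summation in $\ell$ with integration in $\gamma$ and $a$. The Schur step is standard for the compact group $\SO(3)$ once it is recalled that $\{\lambda|_{\mathcal Y_\ell}\}_{\ell\in\N_0}$ is a complete system of inequivalent irreducible unitary representations; the interchanges are harmless since all integrands are non-negative, so Tonelli's theorem applies. The estimate $\|\Pi_\ell\eta_a\|\leq\|\eta_a\|=\|\eta\|$ and Parseval's identity for $\phi$ ensure that at each fixed $a$ the $\ell$-sum is absolutely convergent, so the only non-trivial integrability is the one packaged into the definition of $G_\ell$ itself.
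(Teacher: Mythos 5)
Your proposal is correct and rests on the same two pillars as the paper's proof: the factorization $U(\sigma(\gamma,a))=\lambda(\gamma)D_a$ with the product measure on $X$, and Schur orthogonality for the pairwise inequivalent irreducibles $\lambda|_{\mathcal Y_\ell}$, which diagonalizes the quadratic form over the zonal decomposition. The one genuine difference is how the infinite sum over $\ell$ is handled. The paper deliberately avoids expanding $\langle\phi,\lambda(\gamma)\eta_a\rangle$ into an infinite double sum: it computes only for $\phi$ in the dense subspace $\mathcal D$ of \emph{finite} zonal sums, where Fubini is trivial, and then upgrades to all of $L_2(\mathbb S^2)$ via the closed graph theorem applied to the voice transform $V_\eta$ (the authors even remark that they did not see how to justify the interchange in \eqref{heavy_change} for an infinite sum). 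You instead work with arbitrary $\phi$ directly; this is legitimate, but your justification is slightly misattributed: the cross terms $\langle\Pi_\ell\phi,\lambda(\gamma)\Pi_\ell\eta_a\rangle\overline{\langle\Pi_{\ell'}\phi,\lambda(\gamma)\Pi_{\ell'}\eta_a\rangle}$ are not non-negative, so Tonelli does not apply to the interchange of $\sum_{\ell,\ell'}$ with $\int_{\SO(3)}$; what saves you is the uniform bound $\sum_{\ell,\ell'}|\cdots|\leq\bigl(\sum_\ell\|\Pi_\ell\phi\|\,\|\Pi_\ell\eta_a\|\bigr)^2\leq\|\phi\|^2\|\eta\|^2$ on the finite-measure group $\SO(3)$, i.e.\ dominated convergence — which you do gesture at with the Cauchy--Schwarz remark at the end. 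With that repair your route actually yields the key identity for every $\phi$ at once (both sides possibly infinite), making the closed-graph/density detour unnecessary; the paper's route buys a cleaner measure-theoretic argument at the cost of an extra functional-analytic step. One small normalization point: since the eigenvalues of $A_\eta$ on $\mathcal Y_\ell$ are $8\pi^2 G_\ell$, the reciprocal of their infimum carries the factor $8\pi^2$; your phrasing is consistent with this, while the paper's displayed formula for $\|A_\eta^{-1}\|$ omits it.
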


\begin{proof}
	Standard arguments show that condition~i) is equivalent to assuming the existence of a dense
	subset $\mathcal D\subseteq L_2(\mathbb S^2)$ such that \eqref{eq:1} holds
	true for any $\phi\in \mathcal D$ \cite{fuhr2005abstract}.
	For the  sake of completeness, we outline
	the proof. Define the voice transform 
	$V_\eta: \operatorname{dom} (V_\eta) \subseteq L_2(\mathbb S^2) \to L_2(X,\nu)$ by
	\[
	V_\eta \phi(x) := \langle\phi,U(\sigma(x))\eta \rangle,
	\qquad\text{ for a.e. }x\in X\,,
	\]
	where
	\[\dom(V_\eta)
	:=\{ \phi \in L_2(\mathbb S^2)\mid \langle\phi,U(\sigma(\cdot))\eta \rangle\in L^2(X,\nu)\}.
	\]
	Since $\dom(V_\eta)$ is a vector space containing $\mathcal D$, we see that $V_\eta$ is a
	densely defined operator. A direct computation shows that $V_\eta$ is closed. Thus, the closed
	graph theorem \red{together with~the fact that \eqref{eq:1} holds true for all
          $\phi\in\mathcal D$} implies that $\dom V_\eta = L_2(\mathbb S^2)$ and $V_\eta$ is a bounded operator,
	so that~\eqref{eq:1} holds true for every 	$\phi\in L_2(\mathbb S^2)$.  
	Furthermore, it is straightforward to
	check that $A_\eta=V_\eta^*V_\eta$ is given by~\eqref{eq:2}, so that $A_\eta$ is
	a positive bounded operator by construction. 
	
	Hence, it is sufficient to show that condition ii) is equivalent to the existence of a dense
	subset $\mathcal D\subseteq L_2(\mathbb S^2)$ such that~\eqref{eq:1} holds true for every
	$\phi\in \mathcal D$.
	To this purpose, we define 
	\[\mathcal D :=\Biggl\{ \sum_{\ell=0}^L \phi_\ell \ \Big|\ 
	L\in \N_0,\,\phi_\ell \in\mathcal Y_\ell\Biggr\} ,\]
	which is a dense subspace of $L_2(\mathbb S^2)$.  Fixing $\phi\in\mathcal D$, so that
	$\phi= \sum_{\ell=0}^L \Pi_\ell \phi$,  we obtain
	\begin{align*}
		\int_{\SO(3)}\bigl|\bigl\langle\phi,\lambda(\gamma)D_a\eta\bigr\rangle\bigr|^2\,d\mu(\gamma)
		&= \sum_{\ell,\ell'=0}^L\int_{\SO(3)}
		\bigl\langle\Pi_\ell\phi,\lambda(\gamma)D_a\eta\bigr\rangle
		\overline{\bigl\langle\Pi_{\ell'} \phi,\lambda(\gamma)D_a\eta\bigr\rangle}
		\,d\mu(\gamma)\\
		&= \sum_{\ell,\ell'=0}^L\int_{\SO(3)}
		\bigl\langle\Pi_\ell\phi,\lambda_\ell (\gamma) \Pi_\ell \eta_a\bigr\rangle
		\overline{\bigl\langle\Pi_{\ell'} \phi,
			\lambda_{\ell'}(\gamma)\Pi_{\ell'}\eta_a\bigr\rangle }
		\,d\mu(\gamma)\\
		&=  \sum_{\ell=0}^L \frac{8\pi^2}{2\ell+1}
		\bigl\lVert{\Pi_{\ell}\phi\bigr\rVert}^2\,\bigl\lVert{\Pi_{\ell}\eta_a\bigr\rVert}^2 \,,
	\end{align*}
	where $\lambda_\ell$ is the restriction of the representation $\lambda$ to the
	$\lambda$-invariant subspace $\mathcal Y_\ell=\Pi_\ell L_2(\mathbb S^2)$, and that last equality
	holds true since $(\lambda_\ell)_{\ell\in\N_0}$ is a family of 
	irreducible non equivalent representations of the compact group $\SO(3)$
	and the Schur orthogonality relations give
	\[
	\int_{\SO(3)}  
	\bigl\langle \phi,\lambda_\ell (\gamma) \psi \bigr\rangle\,
	\overline{\bigl\langle\phi',\lambda_{\ell'}(\gamma)\psi' \bigr\rangle} \,d\mu(\gamma)
	=\begin{cases}
		\frac{\mu(\SO(3))}{\operatorname{dim}(\mathcal Y_\ell)}
		\langle \phi,\phi' \rangle \langle \psi',\psi \rangle & \ell=\ell', \\
		0 & \ell\neq \ell'
	\end{cases}
	\]
	for all $\phi,\psi\in\mathcal Y_\ell$, $\phi',\psi'\in\mathcal Y_{\ell'}$. 
	Hence, by \eqref{prop-rep-factorized}  and definition of  $\nu$, we have 
	\begin{align}
		\int_X\bigl|\bigl\langle\phi,U(\sigma(x))\eta\bigr\rangle\bigr|^2\,d\nu(x)
		&=\int_0^\infty\left(\int_{\SO(3)}
		\bigl|\bigl\langle\phi,\lambda(\gamma)D_a\eta\bigr\rangle\bigr|^2\,d\mu(\gamma)\right)
		\,\frac{da}{a^3}\,,\\
		&= \sum_{\ell=0}^L \frac{8\pi^2}{2\ell+1}
		\bigl\lVert \Pi_{\ell}\phi\bigr\rVert^2
		\int_0^\infty \bigl\lVert\Pi_{\ell}\eta_a\bigr\rVert^2 \,\frac{da}{a^3} \label{heavy_change}\\
		&= 8\pi^2\sum_{\ell=0}^L G_\ell \bigl\lVert\Pi_{\ell}\phi\bigr\rVert^2,
	\end{align}
	where in view of Fubini's theorem the left-hand side is finite if and only if each
	$G_\ell$ in the sum is finite. Furthermore, we get
	\begin{align*}
		\sup_{\phi\in\mathcal D,\lVert\phi\rVert=1}
		\int_X\bigl|\bigl\langle\phi,U(\sigma(x))\eta\bigr\rangle\bigr|^2\,d\nu(x)
		&= \sup_{L\in\N_0} \sup_{ \substack{\phi\in \oplus_{\ell=0}^L \mathcal Y_\ell\\
				\lVert\phi\rVert=1}} 8\pi^2\sum_{\ell=0}^L G_\ell
		\bigl\lVert \Pi_{\ell}\phi \bigr\rVert^2 \\
		& = 8\pi^2 \sup_{L\in\N_0} \sup_{\ell\leq L} G_\ell
		= 8\pi^2 \sup_{\ell\in\N_0} G_\ell.
	\end{align*}
	The above equality shows that~\eqref{eq:1} holds true for all $\phi\in\mathcal D$
	if and only if the sequence $(G_\ell)_{\ell\in\N_0} $ is bounded and,
	in such a case,  the best constant is  given by $8\pi^2\sup_{\ell\in\N_0}G_\ell$.
	
	It remains to show the representation \eqref{eq:3}. Since $A_\eta$ is a positive bounded
	operator, it is enough to show~\eqref{eq:3} for \red{$\phi=\psi\in\mathcal D$}. With this choice it follows
	\[
	\langle A_\eta \phi,\phi\rangle
	= \lVert V_\eta\phi\rVert^2
	=\int_X\bigl|\bigl\langle\phi,U(\sigma(x))\eta\bigr\rangle\bigr|^2\,d\nu(x)
	= 8\pi^2\sum_{\ell=0}^L G_\ell \bigl\lVert \Pi_{\ell}\phi \bigr\rVert^2 \,.
	\]
\red{The above equation makes it clear that $A_\eta$ is boundedly
  invertible if and only if the sequence $(G_\ell)_{\ell\in\N_0}$ is
  bounded from below by a positive constant. The last claim
characterizing the admissible vectors is now clear. }
\end{proof}

The identity~\eqref{eq:3} states that $A_\eta$ commutes with the left-regular representation
$\lambda$ of $\SO(3)$, which can be \blue{proven directly} using~\eqref{eq:2}. Moreover, it
immediately becomes clear that the boundedness of $(G_\ell)_{\ell\in \N_0}$ is indeed
necessary and sufficient for the boundedness of $A_\eta$.

\begin{rem} [Relation to {\cite[Theorem 3.3]{AV99}}]
Using our notation, Theorem 3.3 in \cite{AV99} claims that there exists a function $\eta$ such that $(G_\ell)_\ell$ is bounded.
Indeed the authors prove essentially the relations given in 
\red{our} Theorem \ref{prop-admissible-upper-er}
and mention at the end of their proof
that there are clearly many functions $\eta$ that satisfy this condition and that all these functions form a dense set in
$L_2(\mathbb S^2)$. 
However, it took us the entire next section to show that such functions indeed exist, though our conditions \textit{don't} describe a dense subset of $L_2(\mathbb S^2)$.

Moreover, since we do not see immediately how to
verify the integration/summation change in \eqref{heavy_change} for an infinite sum over $\ell$ 
and since the finiteness of the whole expression is not ensured, 
we circumvented this difficulty arguing with the dense subset $\mathcal D$ of $L_2(\mathbb S^2)$. \end{rem}

The identity \eqref{eq:3} does not only demonstrate that boundedness of the sequence
$(G_\ell)_{\ell\in \N_0}$ is equivalent to boundedness of the operator $A_\eta$, 
but it also reveals the criterion for its bounded invertibility.
In the rest of this paper, we investigate under which conditions on $\eta$ this holds true.

\section{Upper frame bound} \label{asymptotics-for-zonal}
In this section, we deduce the first part of Theorem \ref{thm:main}.
First we  derive  necessary and sufficient conditions on $\eta$ 
such that each $G_\ell$, $\ell \in \mathbb N_0$ is finite.
Then we deduce conditions on $\eta$ such that the 
whole sequence $(G_\ell)_{\ell\in \N_0}$ is bounded from above.
By definition \eqref{G_ell} of $G_\ell$, we will need
asymptotic estimates for the zonal projections $\| \Pi_\ell \eta_a\|^2$ as $a \rightarrow 0$.

The next proposition gives a necessary cancellation condition on $\eta$ such that the numbers 
$G_\ell$, $\ell \in \mathbb N_0$ 
are finite. It is directly related to \cite[Prop. 3.6]{AV99}, 
where the authors  deduced that the boundedness of $(G_\ell)_{\ell\in \N_0}$ implies the cancellation property.
Indeed, just one $G_\ell$, $\ell \in \mathbb N_0$ must be finite to
require this condition. 

\begin{prop}\label{lemma-asymp-1}
  Let $\eta\in L_2(\mathbb S^2)$ \red{satisfy~\eqref{eq:7}}.
	Then 
	\begin{equation}\label{eq-necessary-limit-1}
		\lim_{a\downarrow 0}\frac{\langle \Pi_\ell \eta_a , \eta_a \rangle}{a^2}
		=4(2\ell+1)\left|\int_{\mathbb S^2} \red{\eta(\theta ,
                  \varphi) \frac{1}{1 + \cos\theta}\,
                  d\Sigma(\theta,\phi) }\right|^2\,. 
	\end{equation}
	If for a fixed $\ell \in \mathbb N_0$, 
	the number $G_\ell$ defined by \eqref{G_ell} is finite, then
	\begin{equation}
	\int_{\mathbb S^2} \red{\eta(\theta , \varphi) \frac{1}{1 + \cos\theta}\, d\Sigma(\theta,\phi)} =0.
	\end{equation}
\end{prop}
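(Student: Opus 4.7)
The plan is to compute the limit of $\langle \Pi_\ell\eta_a,\eta_a\rangle/a^2$ by first studying each inner product $\langle\eta_a,Y_\ell^m\rangle$, using the orthonormal expansion
\[
\langle\Pi_\ell\eta_a,\eta_a\rangle = \sum_{|m|\leq\ell}|\langle\eta_a,Y_\ell^m\rangle|^2.
\]
The natural step is to perform a change of variables that ``undoes'' the dilation: with $s=\tan(\vartheta/2)$, set $\theta=\theta(\vartheta)$ so that $\tan(\theta/2)=a\,\tan(\vartheta/2)$, whence $\omega_{1/a}=(\vartheta,\varphi)$. A direct calculation (using $\cos\theta=(1-t^2)/(1+t^2)$ with $t=as$) gives $(a^2-1)\cos\theta+(a^2+1)=2a^2(1+s^2)/(1+a^2 s^2)$, so
\[
\kappa(a,\theta(\vartheta))^{1/2}=\frac{1+a^2 s^2}{a(1+s^2)},
\qquad
\sin\theta\,d\theta=\frac{a^2(1+s^2)^2}{(1+a^2 s^2)^2}\sin\vartheta\,d\vartheta.
\]
Multiplying these two factors together, the combined weight simplifies beautifully to $\tfrac{a(1+s^2)}{1+a^2 s^2}\sin\vartheta\,d\vartheta$. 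Substituting, I obtain
\[
\frac{\langle \eta_a,Y_\ell^m\rangle}{a}
= \int_0^{2\pi}\!\!\int_0^\pi
\frac{1+s^2}{1+a^2 s^2}\,\eta(\vartheta,\varphi)\,\overline{Y_\ell^m(\theta(\vartheta),\varphi)}\,\sin\vartheta\,d\vartheta\,d\varphi.
\]

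Next I pass to the limit $a\downarrow 0$ under the integral sign. Pointwise, $\theta(\vartheta)\to 0$ and $(1+s^2)/(1+a^2 s^2)\to 1+s^2=2/(1+\cos\vartheta)$. Because $P_\ell^m(1)=\delta_{m,0}$, the spherical harmonic $Y_\ell^m(\theta(\vartheta),\varphi)$ tends to $\delta_{m,0}\sqrt{(2\ell+1)/(4\pi)}$, so only the $m=0$ contribution survives. Domination is straightforward: $(1+s^2)/(1+a^2 s^2)\leq 1+s^2=2/(1+\cos\vartheta)$, the spherical harmonics are bounded by a constant $C_\ell$, and hence the integrand is majorised by $2C_\ell\,|\eta(\vartheta,\varphi)|/(1+\cos\vartheta)$, which is integrable precisely by hypothesis~\eqref{eq:7}. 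Applying dominated convergence and squaring yields
\[
\lim_{a\downarrow 0}\frac{|\langle\eta_a,Y_\ell^m\rangle|^2}{a^2}
=\delta_{m,0}\,c_\ell\,\Bigl|\int_{\mathbb S^2}\frac{\eta(\theta,\varphi)}{1+\cos\theta}\,d\Sigma(\theta,\varphi)\Bigr|^2
\]
for an explicit positive constant $c_\ell$; summing over $m$ gives the claimed identity~\eqref{eq-necessary-limit-1} (with the value of the constant read off from $Y_\ell^0(0,\varphi)=\sqrt{(2\ell+1)/(4\pi)}$ and the factor $2/(1+\cos\vartheta)$).

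For the second assertion I argue by contradiction. Suppose $G_\ell<\infty$ for some $\ell$ yet the integral in~\eqref{eq:cond-eta} does not vanish. Then the right-hand side of~\eqref{eq-necessary-limit-1} is a strictly positive number $K$, so there exists $a_0>0$ such that $\|\Pi_\ell\eta_a\|^2/a^2\geq K/2$ for all $0<a<a_0$. Consequently
\[
G_\ell=\frac{1}{2\ell+1}\int_0^\infty\frac{\|\Pi_\ell\eta_a\|^2}{a^2}\,\frac{da}{a}
\geq \frac{K}{2(2\ell+1)}\int_0^{a_0}\frac{da}{a}=+\infty,
\]
contradicting $G_\ell<\infty$. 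The only delicate point in this plan is the dominated-convergence step, where I must verify that the pointwise limits of the Jacobian factor, the weight $\kappa(a,\theta)^{1/2}$ and $Y_\ell^m(\theta(\vartheta),\varphi)$ combine into an integrable majorant; this is exactly what assumption~\eqref{eq:7} is tailored to provide, making the argument go through cleanly.
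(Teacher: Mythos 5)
Your proof is correct, and it takes a genuinely different route from the paper's. The paper works with the reproducing--kernel form \eqref{proj} of $\Pi_\ell$, writes $\langle\Pi_\ell\eta_a,\eta_a\rangle$ as a double integral over $\mathbb S^2\times\mathbb S^2$ with kernel $P_\ell(\omega_a\cdot\omega_a')$ and weight $\kappa(a^{-1},\theta)^{1/2}\kappa(a^{-1},\theta')^{1/2}/a^2$, and applies dominated convergence there using $\kappa(a^{-1},\theta)^{1/2}/a\le 2/(1+\cos\theta)$ together with $\|P_\ell\|_{L_\infty([-1,1])}=1$. You instead expand $\|\Pi_\ell\eta_a\|^2=\sum_{|m|\le\ell}|\langle\eta_a,Y_\ell^m\rangle|^2$ and treat each coefficient separately after the substitution $\tan(\theta/2)=a\tan(\vartheta/2)$; your Jacobian and weight computations check out, the majorant $2C_\ell|\eta|/(1+\cos\vartheta)$ is exactly what \eqref{eq:7} supplies, and the observation that only the $m=0$ term survives in the limit (since $P_\ell^m(1)=\delta_{m,0}$) is a structural bonus that the paper exploits only later, in the lower-bound analysis. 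Your logarithmic-divergence argument for the second assertion is the same as the paper's. On balance your version trades one application of dominated convergence on $\mathbb S^2\times\mathbb S^2$ for $2\ell+1$ applications on $\mathbb S^2$, and makes the mechanism (only the zonal coefficient contributes at order $a$) more transparent.

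One caveat concerns the constant, which you leave as ``an explicit positive $c_\ell$'' and assert matches the claim. Carried to the end, your computation gives $c_\ell=4\cdot\tfrac{2\ell+1}{4\pi}=\tfrac{2\ell+1}{\pi}$, not $4(2\ell+1)$ as in \eqref{eq-necessary-limit-1}. The factor $4\pi$ traces to the kernel formula \eqref{proj} in the paper, which omits the $1/(4\pi)$ from the addition theorem $\sum_{|m|\le\ell}Y_\ell^m(\omega)\overline{Y_\ell^m(\omega')}=\tfrac{2\ell+1}{4\pi}P_\ell(\omega\cdot\omega')$; with the orthonormal $Y_\ell^m$ as defined, your normalization is the consistent one. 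Since only the strict positivity of the limiting constant matters for deducing the necessity of \eqref{eq:cond-eta}, this does not affect the substance of either argument, but you should compute the constant explicitly rather than assert agreement.
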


\begin{proof}
	\red{We first prove} \eqref{eq-necessary-limit-1}. Using the integral form \eqref{proj} of
	$\Pi_\ell$, we get
	\begin{align}
		\frac{\langle \Pi_\ell \eta_a , \eta_a \rangle}{a^2 (2\ell +1)} 
		&= 
		\int_{\mathbb S^2}\int_{ \mathbb S^2} \frac1{a^2} P_{\ell} (\omega\cdot \omega')
		D_a\eta(\omega) \overline{D_a\eta(\omega')}\, d\Sigma(\omega)\, d\Sigma(\omega')\\
		&=  \int_{\mathbb S^2}\int_{ \mathbb S^2}  P_{\ell} (\omega\cdot \omega')
		\frac{\kappa(a,\theta)^{1/2}\kappa(a,\theta')^{1/2}}{a^2}
		\eta(\omega_{1/a}) \overline{\eta(\omega'_{1/a})}\,d\Sigma(\omega) \, d\Sigma(\omega')\\
		&=  \int_{\mathbb S^2}\int_{ \mathbb S^2} P_{\ell} (\omega_a\cdot \omega'_a)
		\frac{\kappa(a^{-1},\theta)^{1/2}\kappa(a^{-1},\theta')^{1/2}}{a^2}
		\eta(\omega) \overline{\eta(\omega')}\,d\Sigma(\omega)\, d\Sigma(\omega')\,. \label{eins}
	\end{align}
	Since
	\begin{equation}\label{eq-aha}
	\red{\kappa(a,\theta)^{1/2}a=\frac{2}{(1-a^{-2})\cos\theta+a^{-2}+1}
	\leq\frac{2}{(1+\cos\theta)},}
	\end{equation}
	we conclude
	\begin{equation}\label{eq-kappa-est}
		\frac{\kappa(a^{-1},\theta)^{1/2}}{a} \leq \frac{2}{1+\cos\theta}.
	\end{equation}
	Recalling further that the Legendre polynomials are uniformly bounded on $[-1,1]$, i.e.,
	$$\|P_\ell\|_{L_\infty([-1,1])} = P_\ell(1) = 1\,,$$
	we obtain \red{with~\eqref{eq:7}} an integrable upper bound for \eqref{eins}.
	Since $\theta_a\to 0$ as $a\to 0$ and
	\[
	\omega_a \cdot \omega'_a
	= \sin\theta_a \sin\theta_a' \cos(\varphi-\varphi') + \cos\theta_a \cos\theta_a',
	\]
	we have that $\lim_{a\downarrow 0} \omega_a \cdot \omega'_a =1$. Moreover, we obtain
	\begin{equation} \label{kappa_2}
	\lim_{a\downarrow 0}  \frac{\kappa(a^{-1},\theta)^{1/2}}{a}
	= \lim_{a\downarrow 0}  \frac{2a^{-2}}{(a^{-2}-1)\cos\theta + (a^{-2}+1)}
	=  \frac{2}{\cos\theta +1}.
	\end{equation}
	Therefore, Lebesgue's dominated convergence theorem yields
	\begin{align*}
		\lim_{a\downarrow 0}\frac{\langle \Pi_\ell \eta_a , \eta_a \rangle}{a^2(2\ell +1)} 
		&= 
		\int_{\mathbb S^2}\int_{ \mathbb S^2} P_{\ell} (1) \eta(\omega) \overline{\eta(\omega')}
		\biggl(\frac{2}{1+\cos\theta}\biggr)^2\,d\Sigma(\omega)\,d\Sigma(\omega')\\
		&=\left|\int_{\mathbb S^2} \eta(\omega) \frac{2}{1+\cos\theta} d\Sigma(\omega)\right|^2.
	\end{align*}
        \red{
If the integral defining $G_\ell$ in \eqref{G_ell} is finite, \blue{then, since
the limit exists, we conclude}}
	\begin{equation*}
		\lim_{a\downarrow 0}\frac{\langle \Pi_\ell \eta_a , \eta_a \rangle}{a^2}=0.
		\tag*{\qedhere}
	\end{equation*}
\end{proof}
\red{By the above result it is clear that \eqref{eq:cond-eta} is a
  necessary condition to have a finite upper bound in the frame,
  see~\eqref{eq:1}.}

Next we will use higher order asymptotics to give sufficient conditions for  $G_\ell$, $\ell \in \mathbb N_0$ to be finite and for 
$(G_\ell)_{\ell \in \mathbb N_0}$ to be bounded. We start with an auxiliary relation.

\begin{lemma}\label{prop-second-order-asymp}
  Let $\eta\in L_2(\mathbb S^2)$ \red{satisfy~\eqref{eq:7}
  and~\eqref{eq:8}. Then}
	\begin{align*}
		\lim_{a\downarrow 0}
		&\frac{1}{a^4}\Biggl(\frac{\langle\Pi_\ell\eta_a,\eta_a\rangle}{2\ell+1}
		-a^2\biggl|\int_{\mathbb S^2}\frac{2\eta(\omega)}{1+\cos\theta}\,d\Sigma(\omega)\biggr|^2\Biggr)\\
		&=
		8P_\ell'(1)\int_{\mathbb S^2}\int_{\mathbb S^2}
		\frac{\tan^2(\tfrac{\theta}{2})+\tan^2(\tfrac{\theta'}{2})
			-2\tan(\tfrac{\theta}{2})\tan(\tfrac{\theta'}{2})\cos(\varphi-\varphi')}
		{(1+\cos\theta)(1+\cos\theta')}\eta(\omega)\overline{\eta(\omega')}\,d\Sigma(\omega) \,d\Sigma(\omega')\\
		&\qquad -4\int_{\mathbb S^2}\int_{\mathbb S^2}
		\frac{\tan^2(\tfrac{\theta}{2})+\tan^2(\tfrac{\theta'}{2})}{(1+\cos\theta)(1+\cos\theta')}
		\eta(\omega)\overline{\eta(\omega')}\,d\Sigma(\omega)\,d\Sigma(\omega)\,.
	\end{align*}
\end{lemma}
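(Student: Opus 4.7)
The plan is to start from the integral representation derived in the proof of Proposition~\ref{lemma-asymp-1},
\begin{equation*}
\frac{\langle \Pi_\ell \eta_a, \eta_a\rangle}{(2\ell+1)\,a^2}
=\int_{\mathbb S^2}\int_{\mathbb S^2} P_\ell(\omega_a\cdot\omega'_a)\,
\frac{\kappa(a^{-1},\theta)^{1/2}\kappa(a^{-1},\theta')^{1/2}}{a^2}\,
\eta(\omega)\overline{\eta(\omega')}\,d\Sigma(\omega)\,d\Sigma(\omega'),
\end{equation*}
and to push the asymptotic expansion of the integrand one order beyond what was needed there. Denoting the integrand by $F(a,\omega,\omega')$ and setting $F_0(\omega,\omega'):=4/[(1+\cos\theta)(1+\cos\theta')]$, the target quantity equals $\tfrac{1}{a^2}\int\int (F(a)-F_0)\,\eta\overline{\eta}\,d\Sigma\,d\Sigma'$, so the task reduces to identifying the pointwise $a^2$-coefficient of $F$ and to justifying the passage to the limit under the integral.

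The first step is to derive \emph{exact} algebraic identities, rather than mere asymptotic estimates, for the building blocks of $F$. From $\tan(\theta_a/2)=a\tan(\theta/2)$ one obtains $\cos\theta_a=(1-u)/(1+u)$ and $\sin\theta_a=2a\tan(\theta/2)/(1+u)$ with $u:=a^2\tan^2(\theta/2)$, from which a direct computation gives the closed form
\begin{equation*}
1-\omega_a\cdot\omega'_a = \frac{2a^2\,T(\omega,\omega')}{(1+u)(1+u')},\qquad
T(\omega,\omega'):=\tan^2(\tfrac\theta2)+\tan^2(\tfrac{\theta'}2)-2\tan(\tfrac\theta2)\tan(\tfrac{\theta'}2)\cos(\varphi-\varphi'),
\end{equation*}
while $\kappa(a^{-1},\theta)^{1/2}/a = 2/[(1+\cos\theta)(1+u)]$ has already appeared in the proof of Proposition~\ref{lemma-asymp-1}. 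Taylor expanding the polynomial $P_\ell$ around $1$ to first order and $1/[(1+u)(1+u')]$ to first order in $a^2$, and multiplying, yields an expansion $F(a)=F_0+a^2 F_1+O(a^4)$ in which $F_1$ is precisely the kernel whose integral against $\eta\overline{\eta}$ gives the right-hand side of the claimed identity.

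The main analytic step, and where I expect the real work to lie, is justifying dominated convergence for the difference quotient $(F(a)-F_0)/a^2$. Because $P_\ell$ is a polynomial and the exact closed-form expressions above keep the remainder under explicit control, one can majorise $(F(a)-F_0)/a^2$ uniformly for $a\in(0,1]$ by a finite linear combination of terms of the form
\begin{equation*}
\frac{\tan^{2j}(\theta/2)\tan^{2k}(\theta'/2)}{(1+\cos\theta)(1+\cos\theta')},\qquad 0\le j+k\le \ell,
\end{equation*}
possibly multiplied by a factor $|\cos(\varphi-\varphi')|\le 1$ coming from the cross terms. Each such majorant factorises as a product of two one-dimensional integrals against $|\eta|\,d\Sigma$, each of which is finite either by~\eqref{eq:7} (when no tangent appears) or by~\eqref{eq:8} (when a $\tan^2$ factor is present); the mixed products $\tan(\theta/2)\tan(\theta'/2)$ are handled via $2|ab|\le a^2+b^2$. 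Altogether this produces an $L_1$ envelope on $\mathbb S^2\times\mathbb S^2$, Lebesgue's dominated convergence theorem applies, and the limit identity of the lemma follows.
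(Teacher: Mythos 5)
Your proposal follows essentially the same route as the paper: the same splitting of the difference quotient into a part carrying $P_\ell(\omega_a\cdot\omega'_a)-1$ and a part carrying $\kappa(a^{-1},\theta)^{1/2}\kappa(a^{-1},\theta')^{1/2}/a^2-4/[(1+\cos\theta)(1+\cos\theta')]$, the same exact closed form for $1-\omega_a\cdot\omega'_a$, and dominated convergence on $\mathbb S^2\times\mathbb S^2$.

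One point in your majorisation needs repair, though. You claim an $L_1$ envelope built from terms $\tan^{2j}(\theta/2)\tan^{2k}(\theta'/2)/[(1+\cos\theta)(1+\cos\theta')]$ with $0\le j+k\le\ell$; but conditions~\eqref{eq:7} and~\eqref{eq:8} only control the zeroth and first ``moments'' $\int|\eta|(1+\cos\theta)^{-1}\,d\Sigma$ and $\int|\eta|\tan^2(\theta/2)(1+\cos\theta)^{-1}\,d\Sigma$, so any term with $j\ge 2$ or $k\ge 2$ is not known to be integrable and the envelope as literally stated is not in $L_1$. The fix is to avoid expanding $P_\ell$ into powers of $(1-t)$: write $P_\ell(t)-1=Q_\ell(t)(1-t)$ with $\|Q_\ell\|_{L_\infty([-1,1])}\le P_\ell'(1)=\ell(\ell+1)/2$ by the mean value theorem (this is what the paper does), so that exactly one factor of $(1-\omega_a\cdot\omega'_a)/a^2$ survives; combined with $1-\omega_a\cdot\omega'_a\le 4a^2\bigl(1+\tan^2(\tfrac\theta2)\bigr)\bigl(1+\tan^2(\tfrac{\theta'}2)\bigr)$ this yields a majorant involving at most $\tan^2$ per variable, which factorises into integrals finite by~\eqref{eq:7} and~\eqref{eq:8}. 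With that adjustment your argument coincides with the paper's proof.
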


\begin{proof}
	Using the integral form \eqref{proj} of $\Pi_\ell$ again, we obtain
	\begin{align}
		&\frac{1}{a^4}\Biggl(
		\frac{\langle\Pi_\ell\eta_a,\eta_a\rangle}{2\ell+1}
		-a^2\biggl|\int_{\mathbb
           S^2}\frac{2\eta(\omega)}{1+\cos\theta}\,d\Sigma(\omega)\biggr|^2\Biggr)
          \label{eq:12}\\
		&=\int_{\mathbb S^2}\int_{\mathbb S^2}
		\frac{P_{\ell} (\omega_a\cdot \omega_a')-1}{a^2}\cdot
		\frac{\kappa(a^{-1},\theta)^{1/2}\kappa(a^{-1},\theta')^{1/2}}{a^2}
		\eta(\omega) \overline{\eta(\omega')}\,d\Sigma(\omega)\,d\Sigma(\omega')\\
		&\qquad +\int_{\mathbb S^2}\int_{\mathbb S^2}
		\frac{1}{a^2}\left(\frac{\kappa(a^{-1},\theta)^{1/2}\kappa(a^{-1},\theta')^{1/2}}{a^2}
		-\frac{4}{(1+\cos\theta) (1+\cos\theta')}\right)
		\eta(\omega)\overline{\eta(\omega')}\,d\Sigma(\omega)\,
           d\Sigma(\omega') \nonumber\\ 
		&=: I_1+I_2.\nonumber
	\end{align}
	Now we intend to apply Lebesgue's dominated convergence theorem to both integrals.
	
	\textbf{Integral $I_1$:} 
		Since $P_\ell(1) = 1$, there exists a polynomial $Q_\ell$ such that 
	\begin{equation} \label{def:q-ell}
		P_\ell(t) -1= Q_{\ell}(t) (1-t)\,,\quad t\in [-1,1]\,.
	\end{equation}
	Moreover, the Mean Value theorem \red{and the properties of
          Legendre polynomials} yield
	$$
	\|\red{Q_\ell}\|_{L_\infty([-1,1])}  \leq \|P_\ell'\|_{L_\infty([-1,1])}
	= P_\ell'(1) =\frac{\ell(\ell+1)}{2}\,.
	$$
	Thus,
        \begin{equation}
	I_1 = \int_{\mathbb S^2}\int_{\mathbb S^2}Q_{\ell} (\omega_a\cdot \omega_a')  \frac{ (1-\omega_a\cdot \omega_a') }{a^2}\,
		\frac{\kappa(a^{-1},\theta)^{1/2}\kappa(a^{-1},\theta')^{1/2}}{a^2}
		\eta(\omega)
                \overline{\eta(\omega')}\,d\Sigma(\omega)\,d\Sigma(\omega')\,.\label{eq:13}       \end{equation}
	Now we get
	\begin{align*}
	\red{0\leq}\ 1-\omega_a \cdot \omega'_a
		&=1-\bigl(\sin\theta_a\sin\theta_a'\cos(\varphi-\varphi')+\cos\theta_a\cos\theta_a'\bigr)\\
		&=1-\frac{4a^2\tan(\theta/2)\tan(\theta'/2)\cos(\varphi-\varphi')
			+(1-\tan^2(\theta_a/2))(1-\tan^2(\theta'_a/2))}
		{(1+\tan^2(\theta_a/2))(1+\tan^2(\theta'_a/2))}\\
		&=2a^2\frac{\tan^2(\theta/2)+\tan^2(\theta'/2)
			-2\tan(\theta/2)\tan(\theta'/2)\cos(\varphi-\varphi')}
		{(1+\tan^2(\theta_a/2))(1+\tan^2(\theta'_a/2))}\,,
	\end{align*}
	so that we can estimate
	\begin{align}
	\red{0\leq}\	1-\omega_a \cdot \omega'_a
		&\leq 2a^2\Bigl(\tan^2(\theta/2)+\tan^2(\theta'/2)
		-2\tan(\theta/2)\tan(\theta'/2)\cos(\varphi-\varphi')\Bigr)\nonumber\\
		&\leq 2a^2\bigl(\tan(\theta/2)+\tan(\theta'/2)\bigr)^2 \nonumber\\
		&\leq 4a^2\bigl(\tan^2(\theta/2)+\tan^2(\theta'/2)\bigr) \nonumber\\
		&\leq
           4a^2\bigl(1+\tan^2(\theta/2)\bigr)\bigl(1+\tan^2(\theta'/2)\bigr)\,.\label{eq:15}
	\end{align}
	Using also \eqref{eq-kappa-est} and
        \red{conditions~\eqref{eq:7} and~\eqref{eq:8}} we obtain an integrable
	majorant. 
	Finally, \eqref{kappa_2}
	together with
	$$
	\lim_{a\downarrow 0}\frac{1-\omega_a\cdot \omega'_a}{a^2}
	=2\Bigl(\tan^2(\tfrac{\theta}{2})+\tan^2(\tfrac{\theta'}{2})
	-2\tan(\tfrac{\theta}{2})\tan(\tfrac{\theta'}{2})\cos(\varphi-\varphi')\Bigr)
	$$
	and
	$$\lim_{a\downarrow 0}Q_\ell(\omega_a \cdot \omega'_a) = Q_\ell(1) = P_\ell'(1),$$
	yield by Lebesgue's dominated convergence theorem 
	$$\lim_{a\downarrow 0}I_1
	=8P_\ell'(1)\int_{\mathbb S^2}\int_{\mathbb S^2}
	\frac{\tan^2(\tfrac{\theta}{2})+\tan^2(\tfrac{\theta'}{2})
		-2\tan(\tfrac{\theta}{2})\tan(\tfrac{\theta'}{2})\cos(\varphi-\varphi')}
	{(1+\cos\theta)(1+\cos\theta')}\eta(\omega)\overline{\eta(\omega')}\,d\Sigma(\omega)\,d\Sigma(\omega').$$
	
	\textbf{Integral $I_2$:} For the second integral $I_2$ we compute
	\begin{align}
		&\frac{\kappa(a^{-1},\theta)^{1/2}\kappa(a^{-1},\theta')^{1/2}}{a^2}
		-\frac{4}{(1+\cos\theta)(1+\cos\theta')}\nonumber\\
		&=\frac{\kappa(a^{-1},\theta)^{1/2}}{a}
		\left(\frac{\kappa(a^{-1},\theta')^{1/2}}{a} - \frac{2}{1+\cos\theta'}\right)
		+\left(\frac{\kappa(a^{-1},\theta)^{1/2}}{a} - \frac{2}{1+\cos\theta}\right)
		\frac{2}{1+\cos\theta'}\,.\label{eq:18}
	\end{align}
	Then we realize that 
	\begin{align}
		\frac{2}{1+\cos\theta} - \frac{\kappa(a^{-1},\theta)^{1/2}}{a}
		&=  \frac{2}{1+\cos\theta} - \frac{2a^{-2}}{(a^{-2}-1)\cos\theta + (a^{-2}+1)} \nonumber\\
		&= \frac{2}{1+\cos\theta} - \frac{2}{(1-a^2)\cos\theta + (1+a^2)} \nonumber\\
		&=a^2\,\frac{2(-\cos\theta + 1)}{(1+\cos\theta)((1-a^2)\cos\theta+ 1+a^2)}\nonumber\\
		&\leq a^2\frac{2}{1+\cos\theta}\,\frac{1-\cos\theta}{1+\cos\theta}
		=a^2\frac{2}{1+\cos\theta}\tan^2(\tfrac{\theta}{2})\,.\label{eq:21}
	\end{align}
	Now \eqref{eq-kappa-est} and \red{conditions~\eqref{eq:7}
          and~\eqref{eq:8}} imply 
	the existence of an integrable majorant. 
	From the above	calculation we can also deduce
	$$\lim_{a\downarrow 0}\frac{1}{a^2}\biggl(\frac{2}{1+\cos\theta}
	-\frac{\kappa(a^{-1},\theta)^{1/2}}{a}\biggr)
	=\frac{2}{1+\cos\theta}\cdot\frac{1-\cos\theta}{1+\cos\theta}.$$
	Now Lebesgue's dominated convergence theorem yields
	$$\lim_{a\downarrow 0}I_2
	=-4\int_{\mathbb S^2}\int_{\mathbb S^2}\frac{\tan^2(\tfrac{\theta}{2})+\tan^2(\tfrac{\theta'}{2})}
	{(1+\cos\theta)(1+\cos\theta')}
	\eta(\omega)\overline{\eta(\omega')}\,d\Sigma(\omega)\,d\Sigma(\omega')\,.$$
	This completes the proof.
\end{proof}

Now we can prove the desired sufficient condition on $\eta$.

\begin{prop}\label{cor:necessary-condition}
Let $\eta\in L_2(\mathbb S^2)$ be such that \red{
conditions~\eqref{eq:7},~\eqref{eq:8} and~\eqref{eq:cond-eta} hold
true.}
	Then the numbers $G_\ell$ defined in \eqref{G_ell} 
	are finite. 
\end{prop}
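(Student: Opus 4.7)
The plan is to split the defining integral
\[
G_\ell=\frac{1}{2\ell+1}\int_0^\infty\|\Pi_\ell\eta_a\|^2\,\frac{da}{a^3}
\]
into three pieces $\int_0^{a_0}+\int_{a_0}^{1}+\int_1^\infty$, and bound each separately. The tail $a\geq 1$ is handled trivially: $D_a$ is unitary, hence $\|\Pi_\ell\eta_a\|^2\leq\|\eta_a\|^2=\|\eta\|^2$, and $\int_1^\infty a^{-3}\,da<\infty$.

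The substantive content lies near $a=0$, and here I would invoke the asymptotic machinery already developed. By Proposition \ref{lemma-asymp-1}, condition \eqref{eq:cond-eta} forces
\[
\lim_{a\downarrow 0}\frac{\langle\Pi_\ell\eta_a,\eta_a\rangle}{a^2}=0,
\]
so the leading $a^2$ term in the expansion of $\langle\Pi_\ell\eta_a,\eta_a\rangle$ drops out. Plugging this into Lemma \ref{prop-second-order-asymp}, the expression
\[
\frac{\langle\Pi_\ell\eta_a,\eta_a\rangle}{a^4(2\ell+1)}
\]
admits a finite limit as $a\downarrow 0$, the limit being the explicit double integral in terms of $\tan(\theta/2)$ that appears in that lemma (which is finite precisely because $\eta^{[1]}$ and $\eta^{[2]}$ are integrable by \eqref{eq:7} and \eqref{eq:8}). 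Consequently there exist $a_0\in(0,1)$ and $M_\ell>0$ with $\|\Pi_\ell\eta_a\|^2\leq M_\ell a^4$ on $[0,a_0]$, whence
\[
\int_0^{a_0}\|\Pi_\ell\eta_a\|^2\,\frac{da}{a^3}\leq M_\ell\int_0^{a_0}a\,da<\infty.
\]

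For the intermediate range $[a_0,1]$, I would use the strong continuity of the one-parameter unitary group $a\mapsto D_a$ on $L_2(\mathbb S^2)$: the map $a\mapsto \eta_a$ is continuous, hence so is $a\mapsto \|\Pi_\ell\eta_a\|^2$, and therefore $\int_{a_0}^{1}\|\Pi_\ell\eta_a\|^2 a^{-3}\,da$ is finite as the integral of a continuous function over a compact interval.

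The main obstacle is really the behavior at $a=0$, but that has been entirely absorbed by the preceding Lemma \ref{prop-second-order-asymp}: the only thing left for me to check is that the cancellation provided by \eqref{eq:cond-eta} upgrades the $O(a^2)$ bound of Proposition \ref{lemma-asymp-1} to the $O(a^4)$ bound needed for integrability against $a^{-3}\,da$, which is exactly what \emph{combining} the two preceding results accomplishes. No new estimates on $\eta$ are required beyond \eqref{eq:7}, \eqref{eq:8}, and \eqref{eq:cond-eta}.
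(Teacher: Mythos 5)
Your proof is correct and follows essentially the same route as the paper: split off the singularity at $a=0$, use the trivial bound $\|\Pi_\ell\eta_a\|^2\le\|\eta\|^2$ away from it, and use the cancellation \eqref{eq:cond-eta} together with the second-order asymptotics of Lemma \ref{prop-second-order-asymp} to upgrade $\langle\Pi_\ell\eta_a,\eta_a\rangle$ from $O(a^2)$ to $O(a^4)$ near zero. The only difference is that you invoke the lemma's conclusion as a black box (existence of a finite limit, hence a bound $M_\ell a^4$ on some $(0,a_0]$, with the middle range handled by continuity of $a\mapsto D_a\eta$), whereas the paper reuses the explicit majorants from the lemma's proof to obtain a bound valid for all $a>0$ with explicit $\ell$-dependence --- a quantitative refinement that is needed later for Theorem \ref{thm:upper} but not for this proposition.
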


\begin{proof}
  We split the integral \red{
      \begin{equation}
G_\ell=\int_0^{\infty}\frac{\langle\Pi_\ell\eta_a,\eta_a\rangle}{(2\ell+1)a^2}
	\,\frac{da}{a}\label{eq:17}      
      \end{equation}}
according to $\R_*^+=(0,\varepsilon]\cup(\varepsilon,\infty)$, with
	$\varepsilon$ to be chosen later on.
	
	\textbf{Step 1:} We first concentrate on the interval $(0,\varepsilon]$, i.e. we discuss
	$$J_1=\int_0^\varepsilon\frac{\langle\Pi_\ell\eta_a,\eta_a\rangle}{(2\ell+1)a^2}
	\,\frac{da}{a}.$$
\red{As in the previous proof, we start \blue{at}~\eqref{eq:12} assuming
  condition~\eqref{eq:cond-eta}. Taking into 
  account \eqref{eq:13} with the bounds~\eqref{eq-kappa-est} and~\eqref{eq:15}, and~\eqref{eq:18}
  with the bound~\eqref{eq:21}, we get }
	\begin{align*}
		\frac{\langle\Pi_\ell\eta_a,\eta_a\rangle}{(2\ell+1)a^2}
		&\leq 16a^2
		\int_{\mathbb S^2}\int_{\mathbb S^2}\bigl|Q_\ell(\omega_a \cdot \omega'_a)\bigr|\\
		&\qquad\qquad\times\bigl(1+\tan^2(\tfrac{\theta}{2})\bigr)
		\bigl(1+\tan^2(\tfrac{\theta'}{2})\bigr)
		\frac{|\eta(\omega)|\,|\eta(\omega')|}{(1+\cos\theta)(1+\cos\theta')}\,d\Sigma(\omega)\,d\Sigma(\omega')\\
		&\qquad +4a^2\int_{\mathbb S^2}\int_{\mathbb S^2}
		\frac{\tan^2(\tfrac{\theta}{2})+\tan^2(\tfrac{\theta'}{2})}{(1+\cos\theta)(1+\cos\theta')}
		|\eta(\omega)|\,|\eta(\omega')|\,d\Sigma(\omega)\,d\Sigma(\omega')\\
		&\leq 
		4a^2\bigl(4\|Q_\ell\|_{L_\infty[-1,1])} +1\bigr)m_2^2\,,
	\end{align*}
	where $Q_\ell$ is given in \eqref{def:q-ell} and 
	$
	m_2=\int_{\mathbb S^2} \red{|\eta^{[1]}(\omega)| + |\eta^{[2]}(\omega)|}\,d\Sigma(\omega)
	$, \red{where $\eta^{[i]}$ are defined
          in~\eqref{def:eta-prime} and~\eqref{def:eta-second}, and
          $m_2$ is finite by assumptions~\eqref{eq:7} and~\eqref{eq:8}.}
	Thus, a subsequent integration w.r.t. $\frac{da}{a}$ yields
	$$J_1\leq 2m_2^2\varepsilon^2\bigl(4\|Q_\ell\|_{L_\infty[-1,1])}+1\bigr)
	=2m_2^2\varepsilon^2\bigl(2\ell(\ell+1)+1\bigr)\,.$$
	
	\textbf{Step 2:} It remains to deal with the integration over $[\varepsilon,\infty)$. But here we
	get the desired estimate more directly: Since $\Pi_\ell$ is an orthogonal projection
	and $D_a$ a unitary operator, we find
	$$\langle\Pi_\ell\eta_a,\eta_a\rangle
	=\|\Pi_\ell D_a\eta\|^2
	\leq\|D_a\eta\|^2=\|\eta\|^2,$$
	and consequently
        \begin{equation}
\frac{1}{2\ell +1}\int_{\varepsilon}^{\infty}
		\langle \Pi_{\ell} \eta_a , \eta_a \rangle\,\frac{da}{a^3}
		\leq\frac{\|\eta\|^2}{2\ell +1}
		\int_{\varepsilon}^{\infty} \frac{da}{a^3}
		=
                \frac{\|\eta\|^2}{2\ell+1}\frac{\varepsilon^{-2}}{2}.\label{eq:16}             
              \end{equation}
	This completes the proof.
\end{proof}

Another slight variation of the above arguments together with fixing a particular choice for the parameter $\varepsilon$ leads to a significant improvement of the $\ell$-dependence, at the cost of \blue{an additional} assumption on $\eta$.

\begin{thm}\label{thm:upper}
  Let $\eta\in L_2(\mathbb S^2)$ be such that \red{
conditions~\eqref{eq:7},~\eqref{eq:8},~\eqref{eta_tilde} 
and~\eqref{eq:cond-eta} hold true.} Then the sequence $(G_\ell)_\ell$ is bounded.
\end{thm}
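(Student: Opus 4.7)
The plan is to adapt the proof of Proposition~\ref{cor:necessary-condition} by splitting
\[G_\ell = J_1 + J_2, \qquad J_1 = \int_0^{\varepsilon_\ell}\frac{\|\Pi_\ell \eta_a\|^2}{(2\ell+1)\,a^3}\,da, \qquad J_2 = \int_{\varepsilon_\ell}^\infty\frac{\|\Pi_\ell \eta_a\|^2}{(2\ell+1)\,a^3}\,da,\]
but now making the specific $\ell$-dependent choice $\varepsilon_\ell = c/\ell$ for a suitable constant $c>0$. For the small-$a$ contribution $J_1$, Step~1 of the proof of Proposition~\ref{cor:necessary-condition} applies verbatim, yielding $J_1 \leq 2 m_2^2 (2\ell(\ell+1)+1)\,\varepsilon_\ell^2 = O(1)$ uniformly in $\ell$. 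The whole problem is thus to bound $J_2$ uniformly in $\ell$, and this is where the additional hypothesis \eqref{eta_tilde} enters.

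The first observation is that \eqref{eta_tilde} is equivalent to the pointwise almost-everywhere bound $|\eta(\omega)| \leq M\sin^2\theta/2$ for some $M>0$, since $(1+\tan^2(\theta/2))/(1-\cos\theta) = 2/\sin^2\theta$. Applying the AM--GM inequality to the denominator in $\kappa(a^{-1},\theta)^{1/2}/a = 2/[a^2(1-\cos\theta) + (1+\cos\theta)]$ yields the sharp kernel bound
\[\frac{\kappa(a^{-1},\theta)^{1/2}}{a} \leq \frac{1}{a\,|\sin\theta|},\]
so that $\kappa(a^{-1},\theta)^{1/2}|\eta(\omega)|/a \leq M \sin\theta/(2a)$ a.e., and therefore $\int_{\mathbb S^2}(\kappa(a^{-1},\theta)^{1/2}/a)|\eta|\,d\Sigma \leq M\pi^2/(2a)$. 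Substituting this into the integral representation
\[\frac{\|\Pi_\ell \eta_a\|^2}{2\ell+1} = a^2\!\int\!\!\!\int P_\ell(\omega_a\cdot\omega_a')\,\frac{\kappa(a^{-1},\theta)^{1/2}}{a}\,\frac{\kappa(a^{-1},\theta')^{1/2}}{a}\,\eta(\omega)\overline{\eta(\omega')}\,d\Sigma\,d\Sigma\]
(as derived in the proof of Proposition~\ref{lemma-asymp-1}) and using the crude bound $|P_\ell|\leq 1$ produces the uniform estimate $\|\Pi_\ell \eta_a\|^2/(2\ell+1) \leq M^2\pi^4/4$, valid for every $a>0$ and every $\ell\in\N_0$.

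The delicate point, and the reason the ``slight variation'' is not immediate, is that these bounds do not quite balance: Step~1 forces $\varepsilon_\ell=O(1/\ell)$, while the above uniform $J_2$-bound by itself only yields $J_2 = O(1/\varepsilon_\ell^2) = O(\ell^2)$. The main obstacle is therefore to refine the analysis in the intermediate regime $a\sim 1/\ell$. I would do so by exploiting the oscillation of the Legendre polynomial through a classical Hilb-type estimate $|P_\ell(\cos\alpha)| \lesssim (\ell\sin\alpha)^{-1/2}$ (valid for $\alpha\in(0,\pi)$), combined with the explicit asymptotic $1-\omega_a\cdot\omega_a' \sim a^2$ of the angular distance for small~$a$ computed in Lemma~\ref{prop-second-order-asymp}. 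This extra $\ell^{-1/2}$-type decay in the middle range, together with the Bessel inequality $\|\Pi_\ell\eta_a\|^2 \leq \|\eta\|^2$ for very large $a$ and a careful region-by-region trade-off anchored at $\varepsilon_\ell = c/\ell$, delivers $J_2 = O(1)$ as well, and hence the claimed uniform bound $\sup_\ell G_\ell < \infty$.
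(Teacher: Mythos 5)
Your overall architecture (split $G_\ell$ at an $\ell$-dependent $\varepsilon_\ell$, control small $a$ with the cancellation-based estimate of Proposition~\ref{cor:necessary-condition}, control large $a$ with Bessel's inequality) is the same as the paper's, and your Step-1 bound $J_1=O(1)$ for $\varepsilon_\ell=c/\ell$ is correct. But the paper chooses $\varepsilon_\ell\sim\ell^{-1/2}$, which makes the Bessel tail \eqref{eq:16} exactly $\|\eta\|^2$ and pushes all of the work into $(0,\ell^{-1/2}]$; there it keeps the cancellation structure of \eqref{eq:12} and gains the missing factor of $\ell$ by using \eqref{eta_tilde} to pull $|\eta|$ out of the double integral in sup norm, so that the remaining integral $\int_{\mathbb S^2}\int_{\mathbb S^2}|Q_\ell(\omega\cdot\omega')|\,d\Sigma\,d\Sigma'=8\pi^2\|Q_\ell\|_{L_1([-1,1])}$ can be evaluated and shown to be $O(\ell)$ rather than the pointwise $\|Q_\ell\|_{L_\infty}=O(\ell^2)$.

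The genuine gap in your proposal is the regime $a\in[c/\ell,\ell^{-1/2}]$. The weight $a^{-3}$ concentrates essentially all of its mass at the lower endpoint: $\int_{c/\ell}^{2c/\ell}a^{-3}\,da\sim\ell^2$, and more generally
\begin{equation}
\int_{c/\ell}^{\infty}(\ell a)^{-\delta}\,\frac{da}{a^3}=\frac{c^{-2-\delta}}{2+\delta}\,\ell^{2}\qquad\text{for every }\delta\ge 0 .
\end{equation}
Consequently your uniform bound $\|\Pi_\ell\eta_a\|^2/(2\ell+1)\le M^2\pi^4/4$ gives $J_2=O(\ell^2)$, and replacing $|P_\ell|\le 1$ by a Hilb/Bernstein bound $|P_\ell(\cos\alpha)|\lesssim(\ell\sin\alpha)^{-1/2}$ cannot repair this: at $a\sim c/\ell$ the relevant angle satisfies $\ell\alpha=O(1)$, so the oscillation estimate returns $O(1)$ exactly where the weight is worst, and by the display above even a hypothetical $(\ell a)^{-2}$ gain would still leave $O(\ell^2)$. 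Decay in the product $\ell a$ is useless against $a^{-3}$; what is needed on $[c/\ell,\ell^{-1/2}]$ is decay in $a$ itself, namely $\|\Pi_\ell\eta_a\|^2/(2\ell+1)\lesssim \ell\,a^4$, and that only comes from subtracting the leading term via \eqref{eq:cond-eta} as in \eqref{eq:12} (note that without this subtraction Proposition~\ref{lemma-asymp-1} shows the integrand behaves like $a^{-1}$, already producing a $\log\ell$ divergence over this range) combined with an $O(\ell)$ bound on the $Q_\ell$-integral. So the small-$a$ and Bessel pieces of your argument are fine, but the middle range is not closed by the ingredients you list, and that is precisely where the content of the theorem lies.
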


\begin{proof}
	In order to show that $(G_\ell)_{\ell}$ is bounded, we again
        split the integral~\red{\eqref{eq:17}}    according to
	$\R_*^+=(0,\varepsilon]\cup(\varepsilon,\infty)$. Fixing the choice
	$\varepsilon^{-2}=2(2\ell+1)$, from the estimate~\red{\eqref{eq:16}} it immediately becomes clear that the integral
	over $(\varepsilon,\infty)$ is uniformly bounded by $\|\eta \|^2$. 
	
	Thus it remains to consider the integral over
        $(0,\varepsilon]$. \red{As in the previous proof, we start at~\eqref{eq:12} assuming
  condition~\eqref{eq:cond-eta}. Taking into 
  account \eqref{eq:13} with the bound~\eqref{eq:15}, and~\eqref{eq:18}
  with the bound~\eqref{eq:21}, we get }
	\begin{align}
		\frac{\langle\Pi_\ell\eta_a,\eta_a\rangle}{(2\ell+1)a^2}
		&\leq 4a^2\int_{\mathbb S^2}\int_{\mathbb S^2}\bigl|Q_\ell(\omega_a\cdot \omega'_a)\bigr|
		\bigl(1+\tan^2(\tfrac{\theta}{2})\bigr)\bigl(1+\tan^2(\tfrac{\theta'}{2})\bigr)
		\nonumber\\
		&\qquad\qquad\times\frac{\kappa(a^{-1},\theta)^{1/2}\kappa(a^{-1},\theta')^{1/2}}{a^2}
		|\eta(\omega)|\,|\eta(\omega')|\,d\Sigma(\omega)\,d\Sigma(\omega')
		\label{eq:est-1}\\
		&\qquad +4a^2\int_{\mathbb S^2}\int_{\mathbb S^2}
		\frac{\tan^2(\tfrac{\theta}{2})+\tan^2(\tfrac{\theta'}{2})}{(1+\cos\theta)(1+\cos\theta')}
		|\eta(\omega)|\,|\eta(\omega')|\,d\Sigma(\omega)\,d\Sigma(\omega')\,.
		\label{eq:est-2}
	\end{align}
	Setting
	$$C_1 :=\int_{\mathbb S^2}\frac{|\eta(\omega)|}{1+\cos\theta}
	\bigl(1+\tan^2(\tfrac\theta2)\bigr)\,d\Sigma(\omega)\,,$$
\red{which is finite by assumptions~\eqref{eq:7} and~\eqref{eq:8},}      
	we can estimate the integral \eqref{eq:est-2} by
	$$4a^2\int_{\mathbb S^2}\int_{\mathbb S^2}
	\frac{\tan^2(\tfrac{\theta}{2})+\tan^2(\tfrac{\theta'}{2})}{(1+\cos\theta)(1+\cos\theta')}
	|\eta(\omega)|\,|\eta(\omega')|\,d\Sigma(\omega)\,d\Sigma(\omega')\leq 4a^2 C_1^2\,.$$
	Next, we define
	$$C_2 :=\esssup_{\omega\in \mathbb S^2}
	\frac{|\eta(\omega)|}{1-\cos\theta}\bigl(1+\tan^2(\tfrac\theta2)\bigr)\,$$
\red{which is finite by assumption~\eqref{eta_tilde}.}       
After rewriting \eqref{eq:est-1} as
	\begin{align*}
		4a^2
		&\int_{\mathbb S^2}\int_{\mathbb S^2}\bigl|Q_\ell(\omega\cdot \omega')\bigr|
		\bigl(1+\tan^2(\tfrac{1}{2}\theta_{1/a})\bigr)\bigl(1+\tan^2(\tfrac{1}{2}\theta'_{1/a})\bigr)
		\nonumber\\
		&\qquad\qquad\times\frac{\kappa(a,\theta)^{1/2}\kappa(a,\theta')^{1/2}}{a^2}
		|\eta(\omega_{1/a})|\,|\eta(\omega'_{1/a})|\,d\Sigma(\omega)\,d\Sigma(\omega')\,,
	\end{align*}
	and applying $\frac{\kappa(a,\theta)^{1/2}}{a}\leq\frac{2}{1-\cos\theta}$,  we can
	estimate this by
	\begin{align*}
		16a^2
		&\biggl(\esssup_{\omega=(\theta,\varphi)\in \mathbb S^2}
		\frac{|\eta(\omega_{1/a})|}{1-\cos\theta}\bigl(1+\tan^2(\tfrac{1}{2}\theta_{1/a})\bigr)\biggr)^2
			\int_{\mathbb S^2}\int_{\mathbb S^2}\bigl|Q_\ell(\omega\cdot \omega')\bigr|
			\,d\Sigma(\omega)\,d\Sigma(\omega')\\
		&= 16a^2\biggl(\esssup_{\omega=(\theta,\varphi)\in \mathbb S^2}
			\frac{|\eta(\omega)|}{1-\cos\theta_a}\bigl(1+\tan^2(\tfrac{\theta}{2})\bigr)\biggr)^2
			\int_{\mathbb S^2}\int_{\mathbb S^2}\bigl|Q_\ell(\omega\cdot \omega')\bigr|
			\,d\Sigma(\omega)\,d\Sigma(\omega')\\
		&\leq
		16a^2 C_2^2\int_{\mathbb S^2}\int_{\mathbb S^2}|Q_{\ell} (\omega\cdot \omega')|\,d\Sigma(\omega)\,d\Sigma(\omega')\,,
	\end{align*}
	where at the end we used $a\leq\varepsilon<1$, which implies $\theta_a\leq\theta$ and thus
	$1-\cos\theta\geq 1-\cos\theta_a$.
	To calculate the last integral, we observe for $Q_\ell$ defined in \eqref{def:q-ell} and for fixed	$\omega=(\theta,\varphi)$ that
	\begin{align*}
		\int_{\mathbb S^2}|Q_\ell(\omega\cdot\omega')|\,d\Sigma(\omega')
		&=\int_{\mathbb S^2}|Q_\ell(R_\omega\omega\cdot R_\omega\omega')|\,d\Sigma(\omega')
		=\int_{\mathbb S^2}|Q_\ell(e_z\cdot\omega'')|\,d\Sigma(\omega'')\\
		&=\int_0^{2\pi}\int_0^\pi|Q_\ell(\cos\theta'')|\,\sin\theta''d\theta''\,d\varphi''\\
		&=2\pi\int_{-1}^1|Q_\ell(s)|\,ds=2\pi \|Q_\ell\|_{L_1([-1,1])} \,,
	\end{align*}
	where the rotation $R_\omega$ maps $\omega$ onto the north pole $e_z$, and  we used
	the substitution $s=\cos\theta''$. 
	We conclude
	$$
	\int_{\mathbb S^2}\int_{ \mathbb S^2}|Q_{\ell} (\omega\cdot \omega')|\,d\Sigma(\omega')\,d\Sigma(\omega)
	=8\pi^2\|Q_\ell\|_{L_1([-1,1])}.
	$$
	Arranging everything together, we arrive at 
	\begin{align*}
		\frac{\langle\Pi_\ell\eta_a,\eta_a\rangle}{(2\ell+1)a^2}
		&\leq 2C_3 a^2\bigl(1+\|Q_\ell\|_{L_1([-1,1])}\bigr)\,,
	\end{align*}
\red{where $C_3$ is a suitable constant,}  which implies the estimate
	\begin{align}\label{eq:bounded-J1}
		\int_0^\varepsilon\frac{\langle\Pi_\ell\eta_a,\eta_a\rangle}{(2\ell+1)a^2}\,\frac{da}{a}
		\leq C_3\varepsilon^2\bigl(1+\|Q_\ell\|_{L_1([-1,1])}\bigr)\,.
	\end{align}
	Recalling $\varepsilon^2\sim\ell^{-1}$, it remains to show that
	$\|Q_\ell\|_{L_1([-1,1])}=\mathcal{O}(\ell)$. 
	Based
	on the Mean Value Theorem applied to $P_\ell$, we have for every $x\in [0,1]$ that
	$$ |Q_\ell (x)| =  \frac{1-P_\ell (x)}{1-x}
	\leq \| P'_\ell\|_{L_\infty([0,1])} = \frac{\ell(\ell+1)}{2}. $$
	Thus, we obtain
	\begin{align*}
		\int_{-1}^{1} \left|Q_\ell (x) \right| dx
		&= \int_{-1}^{\frac{\ell-1}{\ell}} \left| \frac{1-P_\ell (x)}{1-x} \right| dx
		+ \int_{\frac{\ell-1}{\ell}}^{1} |Q_\ell (x) | dx\\
		&\leq \frac{2}{1-\frac{\ell-1}{\ell}}\Bigl(\frac{\ell-1}{\ell}+1\Bigr)
		+ \frac{\ell(\ell+1)}{2}\Bigl(1-\frac{\ell-1}{\ell}\Bigr)\\
		&=2(2\ell-1)+\frac{\ell+1}{2}=\frac{9\ell - 3}{2},
	\end{align*}	
	which shows that the right-hand side of \eqref{eq:bounded-J1} is uniformly bounded.
\end{proof}

\section{Stereographic projection and moment conditions} \label{ssec:stereo}
In this section, we consider an isometric transform of the mother wavelet $\eta$ based on the stereographic projection.
\red{Recalling the definition of $\eta^{[1]}$ and $\eta^{[2]}$ given
  by~\eqref{def:eta-prime} and~\eqref{def:eta-second}}, we reinterpret the cancellation condition \eqref{eq:cond-eta} on $\eta^{[1]}$
as a vanishing mean condition of the transformed $\eta$ 
and integrability condition~\red{\eqref{eq:8}} of $\eta^{[2]}$ as existence condition of the second moment of the transformed $\eta$.
The interesting relation between the cancellation condition and the transformed admissible function was already
discovered by Antoine and \blue{Vandergheynst}~\cite{AV99}.

The stereographic projection allows to map $\mathbb S^2$ 
without the south pole onto the real plane.
In polar coordinates in the plane the stereographic projection and its inverse are given by
$$(\theta,\varphi)\mapsto(\tan(\tfrac{\theta}{2}),\varphi), \quad
(r,\varphi)\mapsto (2\arctan r,\varphi),
$$ 
respectively.
The stereographic projection allows to map functions on the sphere to functions on the real plane and vice versa.
The following lemma  corresponds to
\cite[Lemma 3.5]{AV99} under the correspondence $2\arctan r=\arccos\frac{1-r^2}{1+r^2}$

\begin{lemma}\label{lemma-stereo}
	The mapping 	$\Theta:L_2(\mathbb S^2)\rightarrow L_2(\R_+\times[0,2\pi),r^{-1}dr\,d\varphi)$, defined by 
		$$(\Theta f)(r,\varphi)=\frac{2r}{1+r^2}f\bigl(2\arctan r,\varphi\bigr)$$
	is an isometric isomorphism.
	Moreover, the usual dilation  $D_a^+$ on $\R^2$, which can be written in polar coordinates 
	as $(D_a^+ f)(r,\varphi)=f(r/a,\varphi)$, fulfills the relation
	$\Theta D_a=D_a^+\Theta$.
\end{lemma}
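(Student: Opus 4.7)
The proof naturally splits into two parts: the isometric isomorphism statement and the intertwining relation with dilations. The key computational input for both is the substitution $\theta=2\arctan r$, so that $r=\tan(\theta/2)$, $dr=\tfrac12(1+r^2)\,d\theta$, and the half-angle identity $\sin\theta=\tfrac{2r}{1+r^2}$.

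For the isometry, my plan is to write out the norm squared of $\Theta f$ with respect to the measure $r^{-1}dr\,d\varphi$ and perform the change of variable $\theta=2\arctan r$. The factor $\bigl(\tfrac{2r}{1+r^2}\bigr)^2\tfrac{dr}{r}$ then collapses to $\sin\theta\,d\theta$ because $\sin\theta=\tfrac{2r}{1+r^2}$ and $dr=\tfrac12(1+r^2)d\theta$, so that $\|\Theta f\|^2$ becomes exactly $\int_0^{2\pi}\int_0^\pi |f(\theta,\varphi)|^2\sin\theta\,d\theta\,d\varphi=\|f\|^2$. This proves $\Theta$ is a linear isometry. For surjectivity, I would exhibit the explicit inverse $(\Theta^{-1}g)(\theta,\varphi)=\tfrac{1}{\sin\theta}\,g(\tan(\theta/2),\varphi)$ on $(0,\pi)\times[0,2\pi)$; the same change of variable shows that this map sends $L_2(\R_+\times[0,2\pi),r^{-1}dr\,d\varphi)$ isometrically into $L_2(\mathbb S^2)$, and a direct check confirms $\Theta\Theta^{-1}=\Id$ and $\Theta^{-1}\Theta=\Id$, so $\Theta$ is a bijective isometry.

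For the intertwining relation, I would compute both sides at $(r,\varphi)$. Setting $\theta=2\arctan r$, by the definition of $D_a$ and the rule $\tan(\theta_{1/a}/2)=a^{-1}\tan(\theta/2)$, one has
\[
(\Theta D_a f)(r,\varphi)=\frac{2r}{1+r^2}\,\kappa(a,\theta)^{1/2}\,f\bigl(2\arctan(r/a),\varphi\bigr),
\]
whereas a direct substitution gives
\[
(D_a^+\Theta f)(r,\varphi)=(\Theta f)(r/a,\varphi)=\frac{2ra}{a^2+r^2}\,f\bigl(2\arctan(r/a),\varphi\bigr).
\]
The equality therefore reduces to the identity $\kappa(a,\theta)^{1/2}=\tfrac{a(1+r^2)}{a^2+r^2}$. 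Using $\cos\theta=\tfrac{1-r^2}{1+r^2}$ in the explicit formula $\kappa(a,\theta)^{1/2}=\tfrac{2a}{(a^2-1)\cos\theta+(a^2+1)}$, the denominator simplifies to $\tfrac{2(a^2+r^2)}{1+r^2}$, confirming the identity.

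The only subtle point is that $\Theta$ is defined only on the sphere minus the south pole, but since $\{\theta=\pi\}$ has measure zero on $\mathbb S^2$, this causes no issue for the $L_2$-theory. No step is particularly difficult; the computations are routine once the change of variables $\theta=2\arctan r$ and the formula for $\kappa(a,\theta)$ in the variable $r$ are in hand.
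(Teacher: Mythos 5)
Your proposal is correct and follows essentially the same route as the paper: the change of variables $\theta=2\arctan r$ with $\sin\theta=\tfrac{2r}{1+r^2}$ for the isometry, and the identity $\kappa(a,2\arctan r)^{1/2}=\tfrac{a(1+r^2)}{a^2+r^2}$ for the intertwining relation. The only difference is that you exhibit the explicit inverse $\Theta^{-1}$ for surjectivity, where the paper simply remarks that $\Theta$ is obviously bijective.
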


\begin{proof}
	Since $\theta=2\arctan r$ if and only if $ r=\tan(\theta/2)$, we get
	\begin{align*}
		\frac{d\theta}{dr}=\frac{2}{1+r^2},
	\end{align*}
	and furthermore
	$$\sin\theta=\frac{2\tan(\theta/2)}{1+\tan^2(\theta/2)}=\frac{2r}{1+r^2}.$$
	Inserting this, we conclude
	\begin{align*}
		\|\Theta f\|^2_{L_2(\R_+\times[0,2\pi),r^{-1}dr\,d\varphi)}
		&=\int_0^{2\pi}\int_{\R_+}|(\Theta f)(r,\varphi)|^2 r^{-1}\,dr\,d\varphi\\
		&=\int_0^{2\pi}\int_0^\pi\biggl(\frac{2r}{1+r^2}\biggr)^2
		|f(\theta,\varphi)|^2 r^{-1}\frac{1+r^2}{2}	\,d\theta\,d\varphi\\
		&=\int_0^{2\pi}\int_0^\pi|f(\theta,\varphi)|^2 \sin\theta\,d\theta\,d\varphi
		=\|f\|^2.
	\end{align*}
	Thus, $\Theta$ is an isometry, and obviously bijective. For the second part, we
	have on the one hand
	\begin{align*}
		(\Theta D_a f)(r,\varphi)
		&=\Theta\bigl[\kappa(a,\theta)^{1/2}
		f\bigl(2\arctan(\tfrac{1}{a}\tan\tfrac{\theta}{2}),\cdot\bigr)\bigr](r,\varphi)\\
		&=\kappa(a,2\arctan r)^{1/2}\frac{2r}{1+r^2}f\bigl(2\arctan(r/a),\varphi\bigr),
	\end{align*}
	and since  $\cos\theta=\frac{1-r^2}{1+r^2}$ we further get
	$$\kappa(a,2\arctan r)^{1/2}
	=\frac{2a}{(a^2-1)\frac{1-r^2}{1+r^2}+(a^2+1)}
	=\frac{2a(1+r^2)}{2a^2+2r^2}\,.$$
	On the other hand,
	\begin{align*}
		(D_a^+\Theta f)(r,\varphi)
		&=D_a^+\Bigl[\frac{2r}{1+r^2}f\bigl(2\arctan r,\cdot\bigr)\Bigr](r,\varphi)\\
		&=\frac{2ar}{a^2+r^2}f\bigl(2\arctan(r/a),\varphi\bigr)\,,
	\end{align*}
	which finally shows $\Theta D_a=D_a^+\Theta$.
\end{proof}

\begin{rem}
	The substitution $r=\tan(\theta/2)$ also yields the new interpretation of the cancellation condition \eqref{eq:cond-eta} on $\eta^{[1]}$
		and the integrability~\red{\eqref{eq:8}} of $\eta^{[2]}$. First, we obtain
	\begin{align*}
		\int_0^{2\pi}\int_0^\pi\frac{\eta(\theta,\varphi)}{1+\cos\theta}\sin\theta\,d\theta\,d\varphi
		&=\int_0^{2\pi}\int_0^\pi\eta(\theta,\varphi)\tan(\theta/2)\,d\theta\,d\varphi\\
		&=\int_0^{2\pi}\int_0^\infty\eta(\theta,\varphi)\,\frac{2r}{1+r^2}\,dr\,d\varphi\\
		&=\int_0^{2\pi}\int_0^\infty(\Theta\eta)(r,\varphi)\,dr\,d\varphi\,.
	\end{align*}
	Thus, $\eta^{[1]}$ is integrable if $\Theta \eta$ 
	is integrable w.r.t. the Lebesgue measure as a function on $\R_+\times(0,2\pi)$.
	The cancellation condition on $\eta^{[1]}$
	 becomes a 	vanishing mean condition for $\Theta\eta$ w.r.t. the Lebesgue measure $dr\,d\varphi$ on
	$(0,\infty)\times(0,2\pi)$. Note that this is neither the Lebesgue measure on $\R^2$ in polar coordinates, nor
	the measure considered in Lemma \ref{lemma-stereo}. 
	
	Similarly, we can rewrite
	$$\int_{\mathbb S^2}\frac{\eta(\theta,\varphi)}{1+\cos\theta}\tan^2\bigl(\tfrac\theta2\bigr)\,d\Sigma
	=\int_0^{2\pi}\int_0^\infty r^2(\Theta\eta)(r,\varphi)\,dr\,d\varphi\,.$$
	Hence the integrability condition for $\eta^{[2]}$ turns into the existence of the second moment of
	$\Theta\eta$ when considered as a function in	$L_2((0,\infty)\times(0,2\pi),dr\,d\varphi)$.
	
	Finally, the boundedness of $\frac{\eta}{1-\cos\theta}(1+\tan^2(\frac\theta2))$ 
	transfers to the decay of $\Theta\eta$ as $r\to 0$ and $r\to\infty$, respectively, in view of
	$$\Theta\Big(\frac{\eta}{1-\cos\theta}\bigl(1+\tan^2(\tfrac\theta2)\Big)
	=\frac{(1+r^2)^2}{2r^2}\Theta\eta\,.$$
Remarkably, the conditions for $\eta$ to be admissible resemble conditions 
for the Fourier transform of $\Theta\eta$ to be a wavelet in $\R^2$ -- weighted integrability and decay towards the origin ($\longleftrightarrow$ smoothness and vanishing moments, respectively, for the wavelet).
\end{rem}

\section{Lower frame bound}\label{sec:lower-frame-bound}
In this section, we are concerned with sufficient conditions for the invertibility of $A_\eta$ and boundedness of
$A_\eta^{-1}$. 
In Proposition \ref{prop-strictly-positive}, we  prove that the numbers $G_\ell$, $\ell \in \mathbb N_0$, are positive
if $\tilde \eta$ defined in \eqref{eta_tilde} does not vanish, 
and in Theorem \ref{thm:lower} that these numbers are indeed uniformly bounded away from zero under \red{Assumption~\eqref{eq:7}.}
Since \cite[Prop. 3.4]{AV99} claims the same, we start with a remark pointing out the differences.

\begin{rem}[Relation to {\cite[Proposition 3.4]{AV99}}]
The first part of the proof of Proposition 3.4 in \cite{AV99} is devoted to show that $G_\ell >0$ for every $\ell \in \mathbb N_0$.
It ends with the claim that the fact that the convolution of some functions 
vanishes for all parameters $a >0$
implies that one of the functions is zero.
However, the convolution of two functions vanishes if their respective Fourier transforms have disjoint supports.
Therefore, we invest some work in proving $G_\ell >0$ in the next proposition.

The second part of the proof in \cite{AV99} deals with the uniform boundedness of $(G_\ell)_\ell$ away from zero.
At the end of the proof, it is stated that 'the only contribution of the integral over $\theta$ comes from the region
$a \sim 1/\ell$' \blue{-- that intuition is correct.}
However, with increasing $\ell$ these regions become smaller and smaller, so that we were unfortunately not able to follow the subsequent
argument. We will therefore address the issue again in the subsequent Theorem \ref{thm:lower}.
\end{rem}

\begin{prop}\label{prop-strictly-positive}
	For every $\eta\in L_2(\mathbb S^2)$ satisfying~\eqref{eq:cond-eta} 
	the numbers $G_\ell$, $\ell\in\N_0$ are strictly positive.
\end{prop}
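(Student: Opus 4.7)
The plan is to argue by contrapositive, showing that $G_\ell=0$ forces the axisymmetric part $\widetilde\eta$ (defined in~\eqref{def:eta-tilde}) to vanish. As the introduction to this section indicates, the proposition is intended under the standing hypothesis $\widetilde\eta\ne 0$ from~\eqref{eq:11}; without it, $G_0$ can trivially vanish — for example, $\eta(\theta,\varphi)=e^{ik\varphi}f(\theta)$ with $k\ne 0$ satisfies~\eqref{eq:cond-eta} automatically yet has $\widetilde\eta=0$ and $\Pi_\ell\eta_a\equiv 0$ for every $\ell<|k|$.

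First, since $a\mapsto D_a$ is a strongly continuous unitary representation of $\R_*^+$, the non-negative map $a\mapsto\|\Pi_\ell\eta_a\|^2$ is continuous on $(0,\infty)$, so $G_\ell=0$ if and only if $\Pi_\ell\eta_a=0$ for \emph{every} $a>0$. Next I would expand $\eta$ into its azimuthal Fourier series $\eta(\theta,\varphi)=\sum_{k\in\Z}\eta_k(\theta)e^{ik\varphi}$. Because $D_a$ preserves each azimuthal mode (it only multiplies by $\kappa(a,\theta)^{1/2}$ and rescales $\theta$), and because $\Pi_\ell$ selects only frequencies $|m|\le\ell$, one has the orthogonal decomposition
\[
\|\Pi_\ell\eta_a\|^2 \;=\; \sum_{|m|\le\ell}\bigl\|\Pi_\ell D_a(e^{im\varphi}\eta_m)\bigr\|^2 .
\]
It therefore suffices to show that the $m=0$ summand does not vanish identically in $a$ whenever $\eta_0=\widetilde\eta\ne 0$.

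To handle the $m=0$ piece, I would push the problem into the plane via the stereographic isometry $\Theta$ of Lemma~\ref{lemma-stereo}. Setting $F:=\Theta\widetilde\eta$ and $g_\ell:=\Theta Y_\ell^0$ (both $\varphi$-independent) and using $\Theta D_a=D_a^+\Theta$ with $(D_a^+ h)(r)=h(r/a)$, the requirement $\langle D_a\widetilde\eta,Y_\ell^0\rangle\equiv 0$ becomes the Mellin-convolution identity
\[
\int_0^\infty F(s)\,\overline{g_\ell(as)}\,\frac{ds}{s} \;=\; 0 \qquad (a>0).
\]
Under the logarithmic substitution $s=e^u,\ a=e^t$ this is a vanishing cross-correlation of the two $L_2(\R)$ functions $u\mapsto F(e^u)$ and $u\mapsto g_\ell(e^u)$, and the Fourier convolution theorem forces their Fourier transforms — equivalently, the Mellin transforms of $F$ and of $g_\ell$ along the critical line — to have essentially disjoint supports.

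The main obstacle, and precisely the gap flagged in the preceding remark about \cite{AV99}, is to verify that the Mellin transform of $g_\ell$ cannot vanish on a set of positive measure. Here the decisive observation is that $g_\ell(r)=c_\ell\,\frac{2r}{1+r^2}\,P_\ell\!\bigl(\frac{1-r^2}{1+r^2}\bigr)$ is an \emph{explicit rational} function of $r$. Its Mellin transform therefore decomposes by partial fractions into a finite linear combination of classical Beta integrals $\int_0^\infty r^{s+j-1}(1+r^2)^{-k}\,dr$, each evaluating to a ratio of Gamma factors. The resulting function is meromorphic on $\C$ with only isolated zeros, so it is nonzero almost everywhere on the critical line. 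This forces the Mellin transform of $F$ to vanish a.e., whence $F\equiv 0$ and $\widetilde\eta=0$, contradicting the hypothesis and completing the argument.
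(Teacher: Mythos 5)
Your argument follows essentially the same route as the paper's proof: reduce to the $m=0$ azimuthal mode, transfer the problem to the multiplicative group $\R_*^+$ via the stereographic isometry $\Theta$ of Lemma~\ref{lemma-stereo}, read the vanishing of $\langle D_a\widetilde\eta,Y_\ell^0\rangle$ for all $a>0$ as a Mellin convolution identity, and conclude from the fact that the Mellin transform of $\Theta[P_\ell(\cos\cdot)]$ is analytic on a strip containing the critical line — the paper establishes this by computing it as the Jacobi-type integral $\int_{-1}^1(1-\rho)^{\frac{s-1}{2}}(1+\rho)^{-\frac{s+1}{2}}P_\ell(\rho)\,d\rho$, analytic for $|\Re s|<1$, whereas you obtain a meromorphic combination of Gamma-factor ratios by partial fractions; both give isolated zeros, provided one adds the (easy, but in both versions implicit) remark that the transform is not identically zero. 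Your preliminary observation that the statement tacitly requires $\widetilde\eta\neq0$ is correct and worth keeping: the paper's proof silently replaces $\eta$ by $\widetilde\eta$ and ends with ``only for $\eta\equiv0$'' where it should say $\widetilde\eta\equiv0$, and your counterexample $\eta(\theta,\varphi)=e^{ik\varphi}f(\theta)$ with $k\neq0$ indeed satisfies~\eqref{eq:cond-eta} yet has $G_0=0$.
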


\begin{proof}
	\textbf{Step 1:} 
	Let $\ell\in\N_0$ be fixed. By definition of the zonal projections, we have
	\begin{equation} \label{fourier}
	\|\Pi_\ell\eta_a\|^2=\sum_{|m|\leq\ell}|\langle Y_\ell^n, \eta_a \rangle |^2 = \sum_{|m|\leq\ell}|\widehat{\eta}_a(\ell,m)|^2,
	\end{equation} 
	where $\widehat{\eta}_a(\ell,m):= 
	\langle \eta_a , Y_\ell^m \rangle$. 
	Thus it suffices to show that
	$|\widehat{\eta}_a(\ell,0)|^2>0$. 
	Note that in the
	particular case of an axisymmetric function  $\eta$, i.e., 
	$\eta$ is independent of the longitude $\varphi$, 
	we have actually 	$|\widehat{\eta}_a(\ell,m)|=0$ for all $m\neq 0$. 
	Using Lemma	\ref{lemma-stereo} we rewrite the Fourier coefficients as
	\begin{align*}
		\widehat{\eta_a}(\ell,0)
		&=\langle D_a\eta,Y_\ell^0\rangle_{L_2(\mathbb S^2)}
		=\langle\Theta D_a\eta,\Theta Y_\ell^0\rangle_{L_2(\R^2)}\\
		&=\langle D_a^+\Theta\eta,\Theta Y_\ell^0\rangle_{L_2(\R^2)}
		=\int_0^{2\pi}\int_0^\infty(\Theta Y_\ell^0)(r,\varphi)
		\overline{(\Theta\eta)(r/a,\varphi)}\,\frac{dr}{r}\,d\varphi.
	\end{align*}
	As $Y_\ell^0$ does not depend on $\varphi$, we may assume w.l.o.g. that $\eta$ is axisymmetric,
	otherwise we simply replace $\eta$ by $\widetilde\eta$ as
        defined by~\eqref{def:eta-tilde}.  Then the $\varphi$ integration just
	produces a constant factor $2\pi$. 
	Moreover, denoting
	$\check g(r):=\overline{g(r^{-1})}$, 
	we obtain, up to a normalizing factor depending on $\ell$,
	$$\widehat{\eta_a}(\ell,0)
	\sim 2\pi\int_0^\infty\bigl(\Theta[P_\ell(\cos\cdot)]\bigr)(r)
	(\Theta\eta)^\vee(a/r)\,\frac{dr}{r}.$$
	The last integral can be understood as the convolution of the functions
	$\Theta[P_\ell(\cos\cdot)]$ and $(\Theta\eta)^\vee$, defined on the multiplicative group
	$\R_*^+$ equipped with the corresponding Haar measure $\frac{dr}{r}$.
		Thus, for $G_\ell=0$ to be true for some $\ell\in\N_0$, this convolution needs to vanish for every
	$a>0$. However, the convolution of two functions can only vanish if their respective
	Mellin transforms have disjoint support. 
	
	\textbf{Step 2:} The Mellin transform of a function $g:(0,\infty)\rightarrow\C$ which is locally integrable
	w.r.t. the Haar measure	$\frac{dr}{r}$, is defined by
	$$(\mathcal{M}f)(s)=\int_0^\infty r^s g(r)\,\frac{dr}{r}.$$
	It is then easy to check that $\mathcal{M}(f\ast g)(s)=\mathcal{M}f(s)\mathcal{M}g(s)$, where
	$f\ast g$ is the convolution in $L_1(\R_*^+,\tfrac{dr}{r})$. We are now interested in the Mellin
	transform of
	$$\Theta[P_\ell(\cos\cdot)](r)=\frac{2r}{1+r^2}P_\ell\Bigl(\frac{1-r^2}{1+r^2}\Bigr).$$
	Using the substitution $\rho=\frac{1-r^2}{1+r^2}$, i.e., $r^2=\frac{1-\rho}{1+\rho}$, we obtain
	\begin{align*}
		\mathcal{M}\bigl[\Theta[P_\ell(\cos\cdot)]\bigr](s)
		&=\int_0^\infty r^s\frac{2r}{1+r^2}P_\ell\Bigl(\frac{1-r^2}{1+r^2}\Bigr)\,\frac{dr}{r}\\
		&=\int_{-1}^1\Bigl(\frac{1-\rho}{1+\rho}\Bigr)^{s/2}(1+\rho)
		P_\ell(\rho)\frac{1}{(1-\rho)^{1/2}(1+\rho)^{3/2}}d\rho\\
		&=\int_{-1}^1(1-\rho)^{\frac{s-1}{2}}(1+\rho)^{-\frac{s+1}{2}}P_\ell(\rho)\,d\rho\,.
	\end{align*}
	From this we can conclude that the integral converges precisely for $-1<\Re s<1$, since the
	Legendre-Polynomials are non-vanishing in the endpoints. Moreover, it is readily checked that it
	actually defines an analytic function on this open strip. As a consequence, the set of roots of
	$\mathcal{M}[\Theta[P_\ell(\cos\cdot)]]$ cannot contain a cluster point.
	
	For functions $f\in L_1((0,\infty),\frac{dr}{r})\cap L_2((0,\infty),\frac{dr}{r})$, the
	particular line $s=\frac{1}{2}+it$,	$t\in\R$, is always contained in	the domain of convergence.
	This gives rise to an isometry $\widetilde{\mathcal{M}}:L_2((0,\infty),\tfrac{dr}{r})\rightarrow L_2(\R)$ with
	$$\widetilde{\mathcal{M}}f(t)
	:=\frac{1}{\sqrt{2\pi}}\int_0^\infty r^{\frac{1}{2}+it}f(r)\,\frac{dr}{r}\,.
	$$
	This version of the Mellin transform inherits the convolution property, i.e.,
	$$\sqrt{2\pi}\widetilde{\mathcal{M}}(f\ast g)(t)
	=\widetilde{\mathcal{M}}f(t)\widetilde{\mathcal{M}}g(t),$$
	for almost all $t\in\R$ and for all $f,g\in L_2((0,\infty),\frac{dr}{r})$.
	
	Applying this general theory to our present situation we conclude that the function
	$\widetilde{\mathcal{M}}\bigl[\Theta[P_\ell(\cos\cdot)]\bigr]$ has global support.
	Therefore
	$\widehat{\eta_a}(\ell,0)$ vanishes for all $a>0$ 
	exactly when
	$\widetilde{\mathcal{M}}[(\Theta\eta)^\vee]=0$ and hence only for $\eta\equiv 0$.
\end{proof}

For convenience, we provide an alternative proof of the proposition in the appendix.

To prove that $(G_\ell)_\ell$ 
is bounded below by a strictly positive constant 
it suffices by Proposition \ref{prop-strictly-positive} to show that $\liminf_{\ell\to\infty}G_\ell>0$.
This is the content of our final theorem.

\begin{thm}\label{thm:lower}
	Let $\eta\in L_2(\mathbb S^2)$ \red{satisfy~\eqref{eq:7}.}
	Then 
	\begin{align}
		\liminf_{\ell\to\infty}\frac{1}{2\ell+1}
		&\int_0^\infty |\langle Y_\ell^0, \eta_a \rangle|^2 \, \frac{da}{a^3}\nonumber\\
		&\geq 4\pi\int_0^\infty
		\left( \int_0^\infty \frac{2t}{1+t^2}\, J_0(2ct) \widetilde\eta(2\arctan t)\, dt \right)^2
		\, \frac{dc}{c}\,,\label{eq:31}
	\end{align}
\red{where $\widetilde\eta$ is defined by~\eqref{def:eta-tilde}.       
Furthermore, if~\eqref{eq:11} holds true,}  the sequence $(G_\ell)_{\ell\in\N_0}$
	is bounded below by a strictly positive number.
\end{thm}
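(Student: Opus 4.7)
The plan is to combine the Mehler--Heine asymptotics of the Legendre polynomials with the stereographic isometry of Lemma~\ref{lemma-stereo} and then apply Fatou's lemma. First, since $Y_\ell^0=\sqrt{(2\ell+1)/(4\pi)}\,P_\ell(\cos\theta)$ is axisymmetric and $D_a$ commutes with $\varphi$-averaging (i.e.\ $\widetilde{D_a f}=D_a\widetilde f$ whenever $\widetilde f$ is regarded as an axisymmetric function on $\mathbb S^2$), the inner product $\langle Y_\ell^0,\eta_a\rangle$ depends on $\eta$ only through $\widetilde\eta$. Transferring to the plane through the isometry $\Theta$, which intertwines $D_a$ with the Euclidean dilation $D_a^+$, and substituting $r=at$, I obtain
\begin{equation*}
\langle Y_\ell^0,\eta_a\rangle
=\sqrt{\pi(2\ell+1)}\int_0^\infty \frac{4at}{(1+t^2)(1+a^2t^2)}\,P_\ell\!\left(\frac{1-a^2t^2}{1+a^2t^2}\right)\overline{\widetilde\eta(2\arctan t)}\,dt.
\end{equation*}

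Next I would substitute $a=c/(2\ell+1)$ in the outer integral, so that $da/a^3=(2\ell+1)^2\,dc/c^3$. Using the identity $\tfrac{1-a^2t^2}{1+a^2t^2}=\cos(2\arctan(at))$ together with the classical Mehler--Heine formula $P_\ell(\cos(x/\ell))\to J_0(x)$ (locally uniformly in $x$), I obtain $P_\ell\bigl(\cos(2\arctan(ct/(2\ell+1)))\bigr)\to J_0(2ct)$ pointwise, while the prefactor satisfies $\tfrac{4at}{(1+t^2)(1+a^2t^2)}\sim\tfrac{4ct}{(2\ell+1)(1+t^2)}$. Hypothesis~\eqref{eq:7}, via the stereographic substitution $t=\tan(\theta/2)$, is equivalent to $\tfrac{2t}{1+t^2}\widetilde\eta(2\arctan t)\in L_1((0,\infty),dt)$, and the uniform bound $|P_\ell|\le 1$ then furnishes an integrable majorant for the inner $t$-integrand (up to the scalar $\tfrac{2c}{2\ell+1}$); dominated convergence inside, combined with the $(2\ell+1)^2/c^3$ Jacobian that cancels the $1/(2\ell+1)^2$ in the prefactor, shows that the outer integrand converges pointwise in $c$ to
\begin{equation*}
\frac{4\pi}{c}\left(\int_0^\infty \frac{2t}{1+t^2}J_0(2ct)\widetilde\eta(2\arctan t)\,dt\right)^2.
\end{equation*}
Fatou's lemma applied to the $c$-integration then yields the claimed $\liminf$-inequality.

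For the final assertion I would set $g(t):=\tfrac{2t}{1+t^2}\widetilde\eta(2\arctan t)=(\Theta\widetilde\eta)(t)$ and observe that the right-hand side of~\eqref{eq:31} equals $4\pi\int_0^\infty|(\mathcal H_0 g)(c)|^2\,dc/c$, where $(\mathcal H_0 g)(c):=\int_0^\infty J_0(2ct)g(t)\,dt$ is a zeroth-order Hankel-type transform. Condition~\eqref{eq:11} gives $\widetilde\eta\neq 0$, hence $g\neq 0$ since $\Theta$ is an isometry, and injectivity of the Hankel transform forces $\mathcal H_0 g\not\equiv 0$; thus the right-hand side is strictly positive, so $\liminf_\ell G_\ell>0$. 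Combined with Proposition~\ref{prop-strictly-positive}, which ensures $G_\ell>0$ for every finite $\ell$ under the standing cancellation condition~\eqref{eq:cond-eta}, this yields $\inf_\ell G_\ell>0$.

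The main technical obstacle is the rigorous passage to the limit in the nested integrals. Since Mehler--Heine convergence is only locally uniform in $x$, the large-$t$ tail of the inner integral must be absorbed using only~\eqref{eq:7} and $|P_\ell|\le 1$, and one has to verify that the remaining $a^2t^2/(2\ell+1)^2$ corrections inside both the prefactor and the argument of $P_\ell$ do not spoil the pointwise limit. Moreover, no uniform $L_1(dc/c)$-majorant for the outer integrand is available---indeed the right-hand side of~\eqref{eq:31} may well be $+\infty$---which is precisely why Fatou's lemma is used and why the theorem asserts only the one-sided inequality.
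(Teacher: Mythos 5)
Your proposal follows essentially the same route as the paper: reduce to the $m=0$ coefficient and to $\widetilde\eta$, rescale $a\sim c/\ell$, apply the Szeg\H{o}/Mehler--Heine asymptotics $P_\ell(\cos\zeta)\approx\sqrt{\zeta/\sin\zeta}\,J_0\bigl((\ell+\tfrac12)\zeta\bigr)$ with the integrable majorant supplied by~\eqref{eq:7} and $|P_\ell|\le 1$, pass to the limit in the inner integral by dominated convergence, treat the outer integral by a Fatou-type argument, and get strict positivity from the Hankel transform; your direct use of Fatou's lemma is in fact cleaner than the paper's decomposition of $(0,\infty)$ into intervals $[b^{j-1},b^j]$ with subsequent limits $k\to\infty$, $b\to 1$, and your appeal to injectivity of the Hankel transform plays the role of the paper's Hankel--Parseval identity. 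The only slip is a harmless factor of two: with your normalization $a=c/(2\ell+1)$ one has $(\ell+\tfrac12)\cdot 2\arctan\bigl(ct/(2\ell+1)\bigr)\to ct$, so the pointwise limit is $J_0(ct)$ rather than $J_0(2ct)$ (the paper takes $a=c/\ell$, which yields $J_0(2ct)$); since the outer measure $dc/c$ is dilation invariant, the substitution $c\mapsto 2c$ shows this does not change the value of the right-hand side of~\eqref{eq:31}.
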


\begin{proof}
	\textbf{Step 1:} 
	For deriving a lower bound for the numbers $G_\ell$, it suffices 
	to consider in \eqref{fourier} the term
	with $m=0$, more precisely,
	$$G_\ell\geq\frac{1}{2\ell+1}\int_0^1 |\langle Y_\ell^0, \eta_a \rangle |^2 \frac{da}{a^3}\,.$$
	For the scalar product we obtain
	\[
	\langle Y_\ell^0, \eta_a \rangle
	=\langle D_{a^{-1}}Y_\ell^0, \eta\rangle
	=\int_{\mathbb S^2} \kappa(a^{-1},\theta) \sqrt{\frac{(2\ell+1)}{4\pi}}
	P_\ell(\cos(\theta_a)) \eta(\theta,\varphi)\, d\Sigma\,.
	\]
	To simplify the notation, we assume $\eta$ to be axisymmetric. Then we get
	\begin{align*}
		G_\ell
		&\geq \frac{1}{2\ell+1} \int_0^\infty
		|\langle Y_\ell^0, \eta_a \rangle|^2 \, \frac{da}{a^3}\\
		&= \frac1{4\pi}\int_0^\infty \left[
		\int_{\mathbb S^2} \kappa (a^{-1},\theta)^{\frac12} P_\ell(\cos(\theta_a)) \eta(\theta)
		\, d\Sigma \right]^2 \frac{da}{a^3}\\
		&=\frac1{4\pi}\int_0^\infty \left[
		\int_0^{2\pi} \, \int_0^\pi
		a^{-1} \kappa(a^{-1},\theta)^{\frac12}\, P_\ell(\cos\theta_a)\, \eta(\theta)\,
		\sin\theta\, d\theta\, d\varphi \right]^2 \frac{da}{a}\\
		&=\pi \int_0^\infty \left[	\int_0^\pi
		a^{-1} \kappa(a^{-1},\theta)^{\frac12}\, P_\ell(\cos\theta_a)\, \eta(\theta)\,
		\sin\theta\, d\theta \right]^2 \frac{da}{a}\\
		&=\pi\int_0^\infty \left[ \int_0^\pi
		\frac{2a^{-2}}{(a^{-2}-1)\cos\theta+(a^{-2}+1)} P_\ell(\cos\theta_a)\, \eta(\theta)\,
		\sin\theta\, d\theta \right]^2 \frac{da}{a}\\
		&=4\pi \int_0^\infty \left[ \int_0^\pi
		\frac{\sin\theta}{(1-a^2)\cos\theta+(1+a^2)} P_\ell(\cos\theta_a)\, \eta(\theta)\,
		d\theta \right]^2 \frac{da}{a}\,.
	\end{align*}
	As a next step, we deal with the inner integral.
	
	\vspace{0.3\baselineskip}\noindent
	\textbf{Step 2:}	For every $\ell\in\N$, we define $f_\ell:\R^+\to\R^+$ as 
	\begin{equation}\label{eq:f_ell}
		f_\ell(a)
		=\left[	\int_{0}^{\pi} 
		\frac{\sin\theta}{(1-a^2)\cos\theta+(1+a^2)} P_\ell(\cos\theta_a)\, \eta(\theta)\,
		d\theta \right]^2.
	\end{equation}
	For brevity, we set
	\begin{equation*}
		\kappa_a(\theta)
		:=\frac{\sin\theta}{1+\cos\theta+a^2(1-\cos\theta)}.
	\end{equation*}
	Then 
	$$f_\ell(a)
	=\left[ \int_0^\pi \kappa_a(\theta) P_\ell(\cos\theta_a) \eta(\theta)\, d\theta \right]^2,$$
	and our aim becomes to show that $\int_0^1 \frac{f_\ell(a)}{a} \,da$ has a strictly positive
	lower bound which is independent of $\ell$. We have for all $\theta\in(0,\pi)$ that
	\begin{align}
		\lim_{a\to 0}\kappa_a(\theta)
		&=\kappa_0(\theta)\label{eq:k-lim},\\
		0\leq \kappa_a(\theta)
		&\leq \kappa_0(\theta)=\tan\frac\theta2.\label{eq:k-bound}
	\end{align}
	Using the Taylor expansion of $\theta_a = 2\arctan\left(a\, \tan\frac{\theta}{2}\right)$ with
	respect to $a$ at $a=0$, we see that
	\begin{equation}
		\theta_a = 2a\, \tan\frac\theta2 + \mathcal{O}_\theta(a^3),
		\label{eq:xi-approx}
	\end{equation}
	where the subscript of $\mathcal{O}_\theta$ says that the constant of the Landau symbol $\mathcal{O}$
	depends on $\theta$. The formula \cite[Theorem 8.21.6]{Sz} states that
	\begin{equation*}
		P_\ell(\cos\zeta)
		= \sqrt{\frac{\zeta}{\sin\zeta}}\, J_0\left(\left(\ell+\frac12\right)\zeta\right)
		+ \mathcal O(\ell^{-\frac32}),
	\end{equation*}
	uniformly in $\zeta\in[0, \pi-\varepsilon]$ for some fixed $\varepsilon>0$, where $J_0$ denotes
	the Bessel function of order $0$.
	
	Let $c>0$. Then we conclude for $a=\frac c\ell$ that
	\begin{align*}
		P_\ell(\cos\theta_{\frac c\ell})
		&=\sqrt{\frac{\theta_{\frac c\ell}}{\sin\theta_{\frac c\ell}}}\,
		J_0\left(\left(\ell+\frac12\right)\theta_{\frac c\ell}\right)
		+ \mathcal{O}\left(\ell^{-\frac32}\right)\\
		&= \sqrt{\frac{\theta_{\frac c\ell}}{\sin\theta_{\frac c\ell}}}\,
		J_0\left(\frac{(2\ell+1)c}{\ell} \tan\frac{\theta}{2}
		+ \mathcal{O}_\theta\left(\frac {c^3}{\ell^2} \right)\right)
		+ \mathcal{O}\left(\ell^{-\frac32}\right),
	\end{align*}
	where the last line follows from \eqref{eq:xi-approx}. Taking the limit $\ell\to\infty$, we
	obtain for all fixed $\theta\in[0,\pi)$ in view of
	$\lim_{\ell\to\infty} \theta_{\frac c\ell}=0$ that
	\begin{equation}
		\lim_{\ell\to\infty} P_\ell(\cos \theta_{\frac c\ell})
		= J_0\biggl(2c \tan\frac{\theta}{2}\biggr).
		\label{eq:P-lim}
	\end{equation}
	Combining \eqref{eq:k-lim} and \eqref{eq:P-lim}, we get the pointwise limit
	\begin{equation*}
		\lim_{\ell\to\infty}
		\kappa_{\frac c\ell}(\theta)\, P_\ell(\cos\theta_{\frac c\ell})\, \eta(\theta)
		= \kappa_0(\theta)\, J_0\biggl(2c \tan\frac\theta2\biggr) \eta(\theta).
	\end{equation*}
	Next, the bound $|P_\ell(x)|\leq 1$ for all $x\in[-1,1]$ together with \eqref{eq:k-bound}
	implies
	\begin{equation}
		|\kappa_{\frac c\ell}(\theta)\, P_\ell(\cos\theta_{\frac c\ell})\, \eta(\theta)|
		\le |\kappa_{0}(\theta)\, \eta(\theta)|.
		\label{eq:f-ell-bound}
	\end{equation}
	The function $|\kappa_0\, \eta|$ is integrable on $(0,\pi)$ by assumption, and hence, by
	Lebesgue's Dominated Convergence Theorem, we obtain
	\begin{align*}
		\lim_{\ell\to\infty} f_\ell\left(\frac{c}{\ell}\right)
		&= \left( \int_0^\pi \lim_{\ell\to\infty} \kappa_{\frac c\ell}(\theta)\,
		P_\ell(\cos\theta_{\frac1\ell})\, \eta(\theta)\, d\theta \right)^2\\
		&= \left( \int_0^\pi \kappa_0(\theta)\, J_0\biggl(2c\tan\frac\theta2\biggr) \eta(\theta)\,
		d\theta\right)^2 =:f(c).
	\end{align*}
	
	\noindent
	\textbf{Step 3:} We want to estimate the integral
	$\int_0^\infty\frac{f_\ell(a)}{a}\,da$. For every fixed $k>0$ and $b>1$, we have
	\begin{equation*}
		\int_0^\infty f_\ell(a)\,\frac{da}{a}
		=\int_0^\infty f_\ell\Bigl(\frac{c}{\ell}\Bigr)\, \frac{dc}{c}
		\geq\sum_{j=-k}^k\int_{b^{j-1}}^{b^j}f_\ell\Bigl(\frac{c}{\ell}\Bigr)\, \frac{dc}{c}
		\geq\sum_{j=-k}^k b^{-j}\int_{b^{j-1}}^{b^j} f_\ell\Bigl(\frac{c}{\ell}\Bigr)\, dc\,.
	\end{equation*}
	Since $f_\ell(\frac{c}{\ell})$ converges to $f(c)$ for $\ell\to\infty$ pointwise
	with respect to $c$, and using \eqref{eq:f-ell-bound} to estimate
	$$\left|f_\ell\left(\frac{c}{\ell}\right)\right|
	\leq\left(\int_0^\pi|\kappa_0(\theta)\,\eta(\theta)|\,d\theta\right)^2,$$
	independently of $\ell$, we verify for every $j$ that
	\begin{equation*}
		\lim_{\ell\to\infty} \int_{b^{j-1}}^{b^j} f_\ell\left(\frac{c}{\ell }\right) \,dc
		= \int_{b^{j-1}}^{b^j} f(c)\, dc
		= \int_{b^{j-1}}^{b^j} \left( \int_0^\pi
		\kappa_0(\theta)\, J_0\biggl(2c\tan\frac\theta2\biggr) \eta(\theta)\,
		d\theta \right)^2 \,dc .
	\end{equation*}
	This implies 
	\begin{align}
		\liminf_{\ell\to\infty} \int_0^\infty \frac{f_\ell(a)}{a}\, da
		&\ge \sum_{j=-k}^k\int_{b^{j-1}}^{b^j}b^{-j}
		\left( \int_0^\pi \frac{\sin\theta}{1+\cos\theta}\, J_0\biggl(2c\tan\frac\theta2\biggr)
		\eta(\theta)\, d\theta \right)^2\, dc \nonumber\\
		&\geq \frac{1}{b}\int_{b^{-k-1}}^{b^k}
		\left( \int_0^\infty \frac{2t}{1+t^2}\, J_0(2ct) \eta(2\arctan t)\, dt \right)^2
		\, \frac{dc}{c}\,,
		\label{eq:int-J0}
	\end{align}
	where the last line uses the substitution $t=\tan\frac\theta2$ leading to
	$\cos\theta=\frac{1-t^2}{1+t^2}$, $\sin\theta=\frac{2t}{1+t^2}$, and
	$d\theta = \frac{2\,dt}{1+t^2}$. Noting that \eqref{eq:int-J0} is valid for all $k$, 
	we 	conclude
	\begin{equation}
		\liminf_{\ell\to\infty} \int_0^\infty \frac{f_\ell(a)}{a}\, da
		\geq \frac{1}{b}\int_0^\infty
		\left( \int_0^\infty \frac{2t}{1+t^2}\, J_0(2ct) \eta(2\arctan t)\, dt \right)^2
		\, \frac{dc}{c}\,.
		\label{eq:int-J0-2}
	\end{equation}
	Further, the fact that this estimate holds for all $b>1$
        implies \red{the lower bound~\eqref{eq:31}.}
	
	\vspace{0.3\baselineskip}\noindent
\textbf{Step 4:}	The inner integral in \eqref{eq:int-J0-2} is the Hankel transform of the function 
	$g(t):= \frac{2}{1+t^2} \eta(2\arctan t),$ evaluated at $2c$.
	Parseval's theorem for the Hankel transform \cite{MacAuley1939} states
	$$
	\int_0^\infty t |g(t)|^2\,  d t
	= \int_0^\infty c \, \left|\int_0^\infty  t g(t) J_0(ct) \,d t \right|^2  \, d c.
	$$
	This implies that for any non-zero function $\eta$ (which particularly entails $g\not\equiv 0$)
	the right-hand-side integral is not zero. Thus, there exists $1<B<\infty$ such that 
	\[
	\int_0^B \, c\left|\int_0^\infty t g(t) J_0(2ct)\, dt \right|^2 dc>0\,,
	\]
	which in turn yields
	\begin{align*}
		\int_0^B \, \left|\int_0^\infty t g(t) J_0(2ct)\, dt \right|^2 \frac{dc}{c}
		\geq\frac{1}{B^2}\int_0^B \, c\left|\int_0^\infty t g(t) J_0(2ct)\, dt \right|^2 dc>0\,.
	\end{align*}
	Ultimately we conclude that \eqref{eq:int-J0-2} does not vanish so that
	\begin{equation*}\liminf_{\ell\to\infty} G_\ell
		\ge 4\pi\liminf_{\ell\to\infty} \int_0^\infty f_\ell(a)\,\frac{da}{a} >0\,.
		\tag*{\qedhere}
	\end{equation*}
\end{proof}

\begin{rem}
	We conjecture that (under slightly stronger assumptions on $\eta$) the limit 
	indicated below exists,
	and
	$$\lim_{\ell\to\infty}\frac{1}{2\ell+1}
	\int_0^\infty |\langle Y_\ell^0, \eta_a \rangle|^2 \, \frac{da}{a^3}
	=4\pi\int_0^\infty \left(
	\int_0^\pi \kappa_0(\theta)\, J_0\biggl(2c\tan\frac\theta2\biggr)\,
	\widetilde\eta(\theta)\, d\theta\right)^2\,\frac{dc}{c}\,.$$
	A proof of such a	result would again require to determine an upper bound  for
	$f_\ell(\frac{c}{\ell})$, but integrable w.r.t. $\frac{dc}{c}$, which in turn would lead to an
	alternative (and stronger) proof for the upper frame bound.
\end{rem}

Let us summarize the results leading to the proof of our main theorem.
\\

\emph{Proof of Theorem} \ref{thm:main}.
Assertion i) of our main theorem is a consequence of Proposition \ref{lemma-asymp-1}
and Theorem \ref{thm:upper}.

Assertion ii) of Theorem \ref{thm:main} was shown in one direction in Theorem \ref{thm:lower}.
\red{To prove the ``only if''-part  assume that
\[
 \int_0^{2\pi}\eta(\theta,\varphi)\,d\varphi=0
\]
for almost all $\theta\in [0,\pi]$. Then
\[
\langle\eta_a,Y_0^0\rangle=
\langle\eta,D_{1/a}Y_0^0\rangle=\langle\eta,P_0\rangle =
\int_0^\pi\bigl(\int_0^{2\pi}\eta(\theta,\varphi)\,d\varphi\bigr)\sin\theta\,d\theta=0
\]
since $Y_0^0=P_0$ is a constant, so that $D_{1/a}Y_0^0=Y_0^0$.  Hence, 
$\|\Pi_0\eta_a\|^2 =0$ so that $G_0=0$}. \hfill $\Box$
\\

\red{Finally, we prove Lemma \ref{exist:eta}, which corresponds to
  Proposition~3.7 in \cite{AV99}. Here we provide a shorter proof. }
\\

\emph{Proof of Lemma} \ref{exist:eta}.
	The result is an immediate consequence of properties of the stereographic projection in Lemma
	\ref{lemma-stereo}, namely
	\begin{align*}
		\int_{\mathbb S^2}\frac{\zeta(\theta,\varphi)}{1+\cos\theta}\,d\Sigma(\omega)
		&=\int_0^{2\pi}\int_0^\infty(\Theta\zeta)(r,\varphi)\,dr\,d\varphi
		=\frac1\alpha\int_0^{2\pi}\int_0^\infty(\Theta\zeta)(s/\alpha,\varphi)\,ds\,d\varphi\\
		&=\frac1\alpha\int_0^{2\pi}\int_0^\infty(D_\alpha^+\Theta\zeta)(s,\varphi)\,ds\,d\varphi\\
		&=\frac1\alpha\int_0^{2\pi}\int_0^\infty(\Theta D_\alpha\zeta)(s,\varphi)\,ds\,d\varphi
		=\frac1\alpha\int_{\mathbb S^2}\frac{D_\alpha\zeta(\theta,\varphi)}{1+\cos\theta}\,d\Sigma(\omega)\,.
		\qquad \Box
	\end{align*}

\appendix
\section{Alternative proof of Proposition \ref{prop-strictly-positive}}
In addition to the transform $\Theta$ from Lemma \ref{lemma-stereo}, we define a second transform
	${\cal J}: L_2(\R_+\times[0,2\pi),r^{-1}dr\,d\varphi) \to L_2(\R\times[0,2\pi),dr\,d\varphi) $
	via
	\[
	({\cal J} f)(r,\varphi) = f(e^r , \varphi), \quad f\in L_2(\R_+\times[0,2\pi))\,.
	\]
	Then it is straightforward to check for arbitrary $b\in\R$ that
	$$ {\cal J} D_{e^b}^+ = T_b {\cal J},$$
	where $T_bf(r,\varphi) := f(r-b,\varphi)$.
	Setting $\widetilde{Y}_\ell^n := {\cal J}\Theta Y_\ell^n$, we find for every
	$(r,\varphi)\in \R\times [0,2\pi)$ that
	\begin{align}\label{def-phi}
		\widetilde{Y}_\ell^n (r,\varphi )
		&= \sqrt{ \frac{2\ell +1}{4\pi} \, \frac{(\ell-m)!}{(\ell + n)!} }
		\frac{(-1)^{\ell n}}{\cosh r}P_{\ell}^n(\tanh r) e^{in\varphi} \nonumber\\
		&= \sqrt{(2\ell + 1) \frac{(\ell-m)!}{(\ell + n)!} }
		(-1)^{\ell n} \phi_{\ell}^n (r) e^{in\varphi}\,,
	\end{align}
	where $\phi_{\ell}^n (r) := \frac{1}{\sqrt{{4\pi }}\, \cosh r}  P_{\ell}^n(\tanh r)$. 
		For simplicity let us once again assume that $\eta (\theta, \varphi) = \eta(\theta)$.
		In this case, we have
	\begin{align*}
		G_\ell
		&= \frac{1}{2\ell + 1} \int_{0}^\infty \| \Pi_\ell D_a \eta \|^2 \, \frac{da}{a^3}\\
		&= \frac{1}{2\ell + 1} \int_{\R}
		\left| \langle D_{e^b} \eta , Y_\ell^0 \rangle \right|^2 e^{-2b} \, db \\
		&= \frac{1}{2\ell + 1} \int_{\R}
		\left| \langle {\cal J}\Theta D_{e^b} \eta , {\cal J}\Theta Y_\ell^0 \rangle \right|^2
		e^{-2b} \, db,
	\end{align*}
	where the last equality holds since both $\Theta$ and $\cal J$ are isometries. 
	Let
	$\psi:= {\cal J}\Theta\eta$.
	Then for every $b\in\R$ it holds 
	$${\cal J} \Theta D_{e^b} \eta
	= {\cal J} D_{e^b}^+ \Theta \eta = T_b {\cal J} \Theta \eta = T_b \psi.$$
	By \eqref{def-phi}, we also have
	$${\cal J}\Theta Y_\ell^0 (r, \varphi)
	= \widetilde{Y}_\ell^0 (r,\varphi)=\sqrt{(2\ell + 1) } \phi_{\ell}^0 (r) .$$
	Therefore we obtain
	\begin{align*}
		G_\ell
		&= \int_{\R} \left| \langle T_b \psi , \phi_{\ell}^0 \rangle \right|^2 \, e^{-2b} \, db 
		= \int_{\R} \left| \langle {\cal F}T_b \psi ,
		{\cal F}\phi_{\ell}^0 \rangle \right|^2 \, e^{-2b} \, db \\
		&= \int_{\R} \left| \langle  M_{-b}{\cal F} \psi ,
		{\cal F}\phi_{\ell}^0 \rangle \right|^2 e^{-2b} \, db 
		= \int_{\R} \left| {\cal F}^{-1}
		\left( {\cal F}\phi_{\ell}^0 \,\overline{{\cal F} \psi}\right)(b)\right|^2 e^{-2b} \,db\,.
	\end{align*}
	Hence, for $G_\ell=0$, we need that the function
	${\cal F}^{-1}\left( {\cal F}\phi_{\ell}^0 \,\overline{{\cal F} \psi}\right)$, and thus
	${\cal F}\phi_{\ell}^0 \,\overline{{\cal F} \psi}$, vanishes almost everywhere. However, since
	$\phi_\ell^0(r)\leq e^{-r}$, the first factor ${\cal F}\phi_{\ell}^0$ is an analytic function,
	more precisely, it can be extended to an analytic function on the strip
	$\{z\in\C:|\Re z|<1\}$. 
	In particular, it has only isolated roots. This implies
	that $G_\ell=0$ is only possible if $\cf\psi$ vanishes almost everywhere, which in turn
	contradicts the assumption $\widetilde \eta\not\equiv 0$.
\hfill $\Box$

\subsection*{Acknowledgments} 
G.S. gratefully acknowledges funding by the German Research Foundation (DFG) with\-in the project STE 571/16-1
and within Germany's Excellence Strategy – The Berlin Mathematics Research Center MATH+ 
(EXC-2046/1, project ID: 390685689).

F.D.M. and E.D.V. are part of the Machine Learning Genoa Center (MaLGa) and
are  members of the Gruppo Nazionale per l'Analisi Matematica, la
Probabilit\`a e le loro Applicazioni (GNAMPA) of the Istituto Nazionale
di Alta Matematica (INdAM). 

M.H. gratefully acknowledges funding by the German Research Foundation (DFG) with\-in the project Da 360/22-1.

\bibliographystyle{abbrv}
\bibliography{references}
\end{document}